\theoremstyle{plain} {
  \newtheorem{thm}{Theorem}[section]
  
  \newtheorem{cor}[thm]{Corollary}
  \newtheorem{lem}[thm]{Lemma}
  \newtheorem{prop}[thm]{Proposition}
  \theoremstyle{definition}
  \newtheorem{rem}[thm]{Remark}
    \newtheorem{constr}[thm]{Construction}
  \theoremstyle{plain}
  \newtheorem{clm}[thm]{Claim}
  \newtheorem{notation}[thm]{Notation}

}
\renewcommand{\subsubsection}{\sssection\rm}
\newcommand{\bG}{\mathbf G}
\newcommand{\bu}{\mathbf u}
\newcommand{\bB}{\mathbf B}
\newcommand{\cB}{\mathcal B}
\newcommand{\cE}{\mathcal E}
\newcommand{\cG}{\mathcal G}
\renewcommand{\P}{\mathbb P}
\DeclareMathOperator{\Iso}{Iso}
\DeclareMathOperator{\spec}{Spec}
\newcommand{\can}{\text{\rm can}}
\newcommand{\id}{\text{\rm id}}
\newcommand{\pr}{\text{\rm pr}}
\newcommand{\inc}{\text{\rm inc}}
\newcommand{\const}{\text{\rm const}}
\newcommand{\Spec}{\text{\rm Spec}}
\newcommand{\Aut}{\text{\rm Aut}}
\newcommand{\Aff}{\mathbf {A}}
\newcommand{\Pro}{\mathbf {P}}
\newcommand \xra {\xrightarrow }
\newcommand \hra {\hookrightarrow }
\newcommand{\ttf}{{\text{f}}}
\renewcommand{\P}{\mathbb P}
\newcommand\mydim{\text{\rm dim}}
\renewcommand \id{\operatorname{id}}
\renewcommand \phi\varphi
\newcommand{\et}{\text{\rm\'et}}
\newcommand{\R}{{\rm R}}
\newcommand{\ZZ}{\mathbb Z}
\begin{document}

\title{Proof of Grothendieck--Serre conjecture
on principal bundles over regular local rings containing a finite field
}

\author{Ivan Panin\footnote{The author acknowledges support of the
RNF-grant 14-11-00456.}
}


\maketitle

\begin{abstract}
Let $R$ be a regular local ring, containing {\bf a finite field}.
Let $\bG$ be a reductive group scheme over~$R$. We prove that a principal $\bG$-bundle over~$R$ is trivial,
if it is trivial over the fraction field of $R$. In other words, if $K$ is the fraction field of $R$,
then the map of non-abelian cohomology pointed sets
\[
  H^1_{\text{\'et}}(R,\bG)\to H^1_{\text{\'et}}(K,\bG),
\]
induced by the inclusion of $R$ into $K$, has a trivial kernel.
If the regular local ring $R$ contains
{\bf an infinite field} this result is proved in \cite{FP}.
{\it Thus the conjecture holds for regular local rings containing a field.}

\end{abstract}

\section{Main results}\label{Introduction}
Let $R$ be a commutative unital ring. Recall that an $R$-group scheme $\bG$ is called \emph{reductive},
if it is affine and smooth as an $R$-scheme and if, moreover,
for each algebraically closed field $\Omega$ and for each ring homomorphism $R\to\Omega$ the scalar extension $\bG_\Omega$ is
a connected reductive algebraic group over $\Omega$. This definition of a reductive $R$-group scheme
coincides with~\cite[Exp.~XIX, Definition~2.7]{SGA3}.
A~well-known conjecture due to J.-P.~Serre and A.~Grothendieck
(see~\cite[Remarque, p.31]{Se}, \cite[Remarque 3, p.26-27]{Gr1}, and~\cite[Remarque~1.11.a]{Gr2})
asserts that given a regular local ring $R$ and its field of fractions~$K$ and given a reductive group scheme $\bG$ over $R$, the map
\[
  H^1_{\text{\'et}}(R,\bG)\to H^1_{\text{\'et}}(K,\bG),
\]
induced by the inclusion of $R$ into $K$, has a trivial kernel. The following theorem, which is the main result of the present paper,
asserts that this conjecture holds,
provided that $R$ contains {\bf a finite field}.
If $R$ contains an infinite field, then the conjecture is proved in [FP].

\begin{thm}\label{MainThm1}
Let $R$ be a regular semi-local domain containing {\bf a finite field}, and let $K$ be its field of fractions. Let $\bG$ be
a reductive group scheme over $R$. Then the map
\[
  H^1_{\text{\'et}}(R,\bG)\to H^1_{\text{\'et}}(K,\bG),
\]
\noindent
induced by the inclusion of $R$ into $K$, has a trivial kernel. In other words, under the above assumptions on $R$ and $\bG$, each principal $\bG$-bundle over $R$ having a $K$-rational point is trivial.
\end{thm}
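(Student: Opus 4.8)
\emph{Proof plan.} The plan is to reduce Theorem~\ref{MainThm1} to the geometric setting in which the method of the infinite-field case \cite{FP} applies, and then to remove the finiteness of the ground field by a base-change-and-specialize argument. First, by N\'eron--Popescu desingularization a regular ring containing a field is a filtered colimit of smooth finite-type algebras over its prime field; since reductive group schemes, principal bundles and their trivializations are all finitely presented, a standard limit argument reduces the theorem to the case
\[
  R=\mathcal{O}_{X,\{x_1,\dots,x_n\}},
\]
where $X$ is a smooth affine irreducible variety over the finite field $\Field$, the $x_i$ are closed points, $\mathbf{G}$ is a reductive $X$-group scheme obtained by spreading out (after shrinking $X$), and $\mathcal{E}$ is a principal $\mathbf{G}$-bundle on $X$ that is trivial over the generic point of $X$; one must show that $\mathcal{E}|_R$ is trivial.

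Next I would pass to the rational function field $\Field(t)$. The ring $R':=R\otimes_\Field\Field(t)=(\Field[t]\setminus\{0\})^{-1}R[t]$ is regular and semi-local (its maximal ideals are the $\mathfrak{m}_iR'$), it is essentially smooth over the \emph{infinite} field $\Field(t)$, and its fraction field is $K(t)$, over which $\mathcal{E}$ is still trivial. Hence \cite{FP} applies and shows that $\mathcal{E}_{R'}$ is trivial. Spreading this trivialization out over the colimit $R'=\varinjlim_{0\neq h\in\Field[t]}R[t,1/h]$ shows that $\mathcal{E}$ is already trivial over $R[t,1/h]$ for some nonzero $h\in\Field[t]$. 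Reducing modulo a closed point $v\in\mathbf{A}^1_\Field\setminus V(h)$ of degree $d_v=[\kappa(v):\Field]$ then shows that $\mathcal{E}$ becomes trivial over the finite \'etale $R$-algebra $R\otimes_\Field\kappa(v)$ of degree $d_v$. Since $\mathbf{A}^1_\Field\setminus V(h)$ contains closed points of every sufficiently large degree, we may choose two such points whose degrees $d_1,d_2$ are coprime; thus $\mathcal{E}$ is trivialized by two finite \'etale extensions of $R$ of coprime degrees.

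The main obstacle is the final descent: one must deduce that $\mathcal{E}$ itself is trivial over $R$ from the fact that it is trivial over the fraction field and becomes trivial over two finite \'etale $R$-algebras of coprime degrees. This is delicate because $H^1_{\et}(R,\mathbf{G})$ is only a pointed set: the hypotheses merely produce finite $R$-subschemes of the torsor $\mathcal{E}$ of coprime degrees, i.e.\ a zero-cycle of degree $1$ on $\mathcal{E}$, and such a cycle does not in general force an $R$-point without further input. Resolving this requires an extra ingredient of "transfer" type --- a corestriction operation along finite \'etale extensions on the pointed functor $A\mapsto\ker\!\big(H^1_{\et}(A,\mathbf{G})\to H^1_{\et}(\operatorname{Frac}A,\mathbf{G})\big)$, exploiting the reductivity of $\mathbf{G}$ and the generic triviality of $\mathcal{E}$ --- or, alternatively, bypassing the passage to $\Field(t)$ altogether by establishing directly a form of the geometric presentation ("nice triples") lemma that survives over a finite field. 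Either way, the real content is to replace the general-position and Bertini-type arguments, available only over infinite fields, by something that works over $\Field$; the remaining steps are the formal machinery already developed for the infinite-field case.
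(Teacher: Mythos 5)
Your reduction to the geometric case ($R=\mathcal O_{X,x_1,\dots,x_n}$ for $X$ smooth affine over the finite field $\Field$) via Popescu and a limit argument matches the paper's outer layer, but from there your route diverges and, as you yourself concede, does not close. The fatal gap is the final descent. After base change to $\Field(t)$, application of \cite{FP}, spreading out, and specializing at closed points of $\mathbb A^1_\Field$ of coprime degrees, you only know that $\cE$ is trivial over $K$ and over two finite \'etale $R$-algebras of coprime degrees. For the non-abelian pointed set $H^1_{\et}(R,\bG)$ there is no corestriction along finite \'etale extensions, and the statement you would need --- that a class killed by extensions of coprime degrees is trivial --- is, already over a field, Serre's injectivity question, which is open for general (semi-)simple groups. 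So the "transfer-type ingredient" you invoke is not available, and the argument proves strictly less than the theorem. A secondary, repairable point: $R\otimes_\Field\Field(t)$ is not semi-local (it acquires extra maximal ideals, e.g.\ the one cut out by $t-x$ for $x\in K$ transcendental over $\Field$), so before quoting \cite{FP} you must semi-localize at the ideals $\mathfrak m_i\otimes\Field(t)$; the spreading-out and specialization step still works after this correction.

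The alternative you mention only in passing --- redoing the geometric presentation directly over the finite field --- is exactly what the paper does, and it is where all the content lies. The proof runs through Theorem~\ref{MainHomotopy} (a $\bG$-bundle $\mathcal G_t$ on $\mathbb A^1_U$, trivial away from a monic $h$ and restricting to $\mathcal G$ at $t=0$), Theorem~\ref{th:psv} (triviality of $s^*\cE_t$, deduced from Theorem~\ref{MainThm2} and Proposition~\ref{SchemeY}), and Theorem~\ref{MainThmGeometric} (purity, reducing reductive $\bG$ to the semi-simple simply connected case, which is then reduced to the simple case by Weil restriction and Faddeev--Shapiro). Over $\Field$ the general-position arguments of \cite{FP} are replaced by Poonen's Bertini theorems (Lemma~\ref{OjPan_1}); quasi-splitness of $\bG_Y$ over the finite residue fields, supplied by Lang's theorem in Proposition~\ref{SchemeY}, replaces isotropy; and the coprime-degree trick does appear, but in a completely different role: in Lemma~\ref{F1F2} one replaces a finite \'etale $Y'/U$ by $(Y'\otimes_\Field k_1)\sqcup(Y'\otimes_\Field k_2)$ with $[k_1:\Field]$, $[k_2:\Field]$ large coprime primes so as to embed it as a closed subscheme of $\mathbb A^1_U$ and to force $\Pic(\mathbb P^1_u-Y_u)=0$ --- not to descend a torsor. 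Your proposal would have to carry out this entire programme to become a proof.
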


Theorem~\ref{MainThm1} has the following
\begin{cor}
Under the hypothesis of Theorem~\ref{MainThm1}, the map
\[
  H^1_{\text{\'et}}(R,\bG)\to H^1_{\text{\'et}}(K,\bG),
\]
\noindent
induced by the inclusion of $R$ into $K$, is injective. Equivalently, if $\cG_1$ and $\cG_2$ are two principal bundles isomorphic over
$\Spec K$,
then they are isomorphic.
\end{cor}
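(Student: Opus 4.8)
The plan is to deduce the Corollary from Theorem~\ref{MainThm1} by the standard torsor‑twisting technique. Let $\cG_1$ and $\cG_2$ be two principal $\bG$-bundles over $\Spec R$ which become isomorphic over $\Spec K$; we must produce an isomorphism $\cG_1\cong\cG_2$ over $\Spec R$. Form the inner twist
\[
  \bG' := \underline{\Aut}_{\bG}(\cG_1),
\]
the $R$-group scheme of automorphisms of $\cG_1$ as a $\bG$-torsor (equivalently, $\bG$ twisted along its adjoint action by the cocycle defining $\cG_1$), and the $\bG'$-torsor
\[
  Q := \underline{\Iso}_{\bG}(\cG_1,\cG_2)
\]
of $\bG$-torsor isomorphisms from $\cG_1$ to $\cG_2$.

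First I would check that $\bG'$ is again a reductive group scheme over $R$ in the sense of the Introduction. Since $\cG_1\to\Spec R$ is faithfully flat and $\bG'$ becomes isomorphic to $\bG$ after this base change, $\bG'$ is affine and smooth over $R$ (both properties descend along fppf covers). For the same reason, for every algebraically closed field $\Omega$ and every homomorphism $R\to\Omega$ the fibre $\bG'_\Omega$ is a form of $\bG_\Omega$; but a torsor under a smooth affine group scheme over $\Spec\Omega$ is itself smooth, affine and non‑empty, hence has an $\Omega$-point and is trivial, so $\bG'_\Omega\cong\bG_\Omega$ is connected reductive. Thus Theorem~\ref{MainThm1} applies to $\bG'$. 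Likewise $Q$ is fppf‑locally trivial (both $\cG_i$ are fppf‑locally trivial), so $Q$ is a genuine $\bG'$-torsor over $R$.

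Next I would invoke the twisting bijection of pointed sets $H^1_{\et}(R,\bG)\cong H^1_{\et}(R,\bG')$ (Giraud; see also \cite[Exp.~XXIV]{SGA3}), under which $[\cG_1]$ goes to the base point and $[\cG_2]$ goes to $[Q]$, compatibly with base change along $R\hra K$. By hypothesis there is an isomorphism $\cG_{1,K}\xrightarrow{\ \sim\ }\cG_{2,K}$ of $\bG_K$-torsors, i.e.\ a $K$-point of $Q$, so $Q_K$ is the trivial $\bG'_K$-torsor, i.e.\ $[Q]$ lies in the kernel of $H^1_{\et}(R,\bG')\to H^1_{\et}(K,\bG')$. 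Applying Theorem~\ref{MainThm1} to the reductive $R$-group scheme $\bG'$ and the torsor $Q$ shows $Q$ is trivial over $R$, i.e.\ $Q(R)\neq\varnothing$, which is exactly an isomorphism $\cG_1\cong\cG_2$ over $\Spec R$. This proves that $H^1_{\et}(R,\bG)\to H^1_{\et}(K,\bG)$ is injective, and the final reformulation in terms of principal bundles is a restatement of this.

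I expect no serious difficulty here: once Theorem~\ref{MainThm1} is available, the argument is purely formal. The only point requiring care — the closest thing to an obstacle — is the verification that the inner form $\bG'$ is again reductive over $R$, so that the theorem may be applied to $\bG'$ rather than only to $\bG$; this is the fppf‑descent argument indicated above and is standard.
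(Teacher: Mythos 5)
Your proposal is correct and is essentially the paper's own proof: the paper also forms the isomorphism scheme $\Iso(\cG_1,\cG_2)$, views it as a torsor under the automorphism group scheme of one of the bundles (an inner form of $\bG$), and applies Theorem~\ref{MainThm1} to that group scheme. You merely spell out the verification, left implicit in the paper, that this inner form is again reductive over $R$.
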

\begin{proof}
Let $\cG_1$ and $\cG_2$ be two principal $\bG$-bundles isomorphic over
$\spec K$.
Let $\Iso(\cG_1,\cG_2)$ be the scheme of isomorphisms.
This scheme is a principal $\Aut\cG_2$-bundle.
By Theorem~\ref{MainThm1} it is trivial, and we see that $\cG_1\cong\cG_2$.
\end{proof}

Note that, while Theorem~\ref{MainThm1} was previously known for reductive group schemes $\bG$ coming from the ground field
({\it an unpublished result due to O.Gabber}), in many cases the corollary is a new result even for such group schemes.

For a scheme $U$ we denote by $\mathbb A^1_U$ the affine line over $U$ and by $\P^1_U$ the projective line over $U$.
Let $T$ be a $U$-scheme. By a principal $\bG$-bundle over $T$ we understand a principal $\bG\times_UT$-bundle.
We refer to
\cite[Exp.~XXIV, Sect.~5.3]{SGA3}
for the definitions of
a simple simply-connected group scheme over a scheme
and a semi-simple simply-connected group scheme over a scheme.

In Section~\ref{Reducing_MainThm1_to_two_others}
we deduce Theorem~\ref{MainThm1} from the following three results.


\begin{thm}{\rm{[Pan1]}}
\label{MainHomotopy}
Let $k$ be a field. Let $\mathcal
O$ be the semi-local ring of finitely many {\bf closed points} on a
$k$-smooth irreducible affine $k$-variety $X$.
Set $U=\spec \mathcal O$.
Let $\bG$ be a reductive
group scheme over $U$.
Let $\mathcal G$ be a principal $\bG$-bundle over $U$
trivial over the generic point of $U$.
Then there
exists a principal $\bG$-bundle $\mathcal G_t$ over the affine line $\mathbb A^1_U=\spec {\mathcal O}[t]$
and a monic polynomial $h(t) \in \mathcal O[t]$ such
that
\par
(i) the $\bG$-bundle $\mathcal G_t$ is trivial over the open subscheme $(\mathbb A^1_U)_h$ in $\mathbb A^1_U$ given by $h(t)\ne0$;
\par
(ii) the restriction of $\mathcal G_t$ to $\{0\}\times U$ coincides
with the original $\bG$-bundle $\mathcal G$.
\par
(iii) $h(1) \in \mathcal O$ is a unit.

\end{thm}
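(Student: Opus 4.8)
\emph{Proof plan.} The approach is the ``nice triples'' method of Panin--Stavrova--Vavilov and Fedorov--Panin: spread the data out to a smooth affine variety, produce by a geometric presentation lemma a relative curve on which the bundle becomes trivial away from a divisor finite over $U$, and then descend that curve-bundle to $\mathbb A^1_U$ by gluing along an elementary distinguished (Nisnevich) square. The one step that is genuinely harder than in the case of an infinite ground field --- and the step I expect to be the main obstacle --- is the geometric presentation lemma when the residue fields of the closed points are finite; over an infinite field it is Artin's good neighbourhoods plus the Gabber--Quillen--Ojanguren--Panin presentation lemma, but over a finite field linear projections and generic hyperplanes are unavailable.

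\emph{Step 1 (spreading out).} Since $\cO=\cO_{X,\{x_1,\dots,x_n\}}$ is the filtered colimit of the coordinate rings of affine open neighbourhoods of $\{x_i\}$ in $X$, and reductive group schemes together with their torsors are finitely presented, there are: an affine open $X'\subseteq X$ containing all $x_i$, a reductive $X'$-group scheme $\bG'$ with $\bG'|_U\cong\bG$, and a $\bG'$-bundle $\mathcal G'$ on $X'$ with $\mathcal G'|_U\cong\mathcal G$. Shrinking $X'$, I may assume $\mathcal G'$ is trivial over a principal open $X'_g=\{g\ne0\}$ with $g(x_i)\ne0$ for all $i$; put $Z:=V(g)$, a proper closed subset of $X'$ avoiding every $x_i$.

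\emph{Step 2 (the nice triple).} The core is to build: a smooth affine morphism $p\colon\mathcal X\to U$ of relative dimension one, a section $\Delta\colon U\to\mathcal X$ of $p$, an \'etale $U$-morphism $\pi\colon\mathcal X\to\mathbb A^1_U$, a closed subscheme $Z'\subseteq\mathcal X$ finite over $U$, and an essentially smooth $U$-morphism $q_X\colon\mathcal X\to X'$ with $q_X\circ\Delta$ the canonical inclusion $U\hra X'$, subject to: (a) $\pi^{-1}(\pi(Z'))=Z'$ and $\pi$ induces an isomorphism $Z'\xra{\ \sim\ }\bar Z':=\pi(Z')$, so that $(\mathcal X,\ \mathbb A^1_U\setminus\bar Z',\ \pi)$ is an elementary distinguished square; (b) $q_X^{-1}(Z)\subseteq Z'$ and $\Delta(U)\subseteq Z'$; (c) $\pi\circ\Delta$ is the zero section of $\mathbb A^1_U$ and $\bar Z'$ is disjoint from the unit section $\{t=1\}$; and (d) the pullback $\mathcal H:=q_X^{*}\mathcal G'$ is a $\bG$-bundle on $\mathcal X$, i.e. the nice triple comes with an identification $q_X^{*}\bG'\cong p^{*}\bG$ extending the tautological one over $\Delta(U)$. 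When the $x_i$ have infinite residue fields one obtains (a)--(d) from Artin's good neighbourhoods and the presentation lemma applied to the pair $\big(X'\times_k U,\ \Gamma_{\mathrm{incl}}\big)$ and the divisor cutting out $Z$; when they are finite one instead invokes a Bertini theorem over finite fields of Poonen type, or passes to two auxiliary field extensions of coprime degree, proves the presentation over each, and descends. This is the hard part.

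\emph{Step 3 (descent and verification).} By (b) and the triviality of $\mathcal G'$ on $X'_g=X'\setminus Z$, the bundle $\mathcal H$ is trivial on $\mathcal X\setminus Z'$; fix a trivialization $\tau$ there. By (a), since $\bG$-bundles satisfy Nisnevich descent, the triple $\big(\text{trivial bundle on }\mathbb A^1_U\setminus\bar Z',\ \mathcal H\text{ on }\mathcal X,\ \tau\text{ over }\mathcal X\setminus Z'\big)$ glues to a $\bG$-bundle $\mathcal G_t$ on $\mathbb A^1_U$ which is trivial on $\mathbb A^1_U\setminus\bar Z'$ and satisfies $\pi^{*}\mathcal G_t\cong\mathcal H$. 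Since $\bar Z'$ is finite over $\cO$, the ideal $I(\bar Z')\subseteq\cO[t]$ contains a monic polynomial, and using (c) one may pick such an $h$ with $h(1)\in\cO$ a unit: take any monic $h_0\in I(\bar Z')$, raise its degree by multiplying by a power of $t$, choose $a\in I(\bar Z')$ with $a(1)=1$ (possible since $\bar Z'\cap\{t=1\}=\varnothing$), and replace $h_0$ by $h_0-a\cdot(h_0(1)-1)$. Then $(\mathbb A^1_U)_h\subseteq\mathbb A^1_U\setminus\bar Z'$, so $\mathcal G_t$ is trivial on $(\mathbb A^1_U)_h$, which is (i); $h(1)$ is a unit, which is (iii); and by (a) and (c) one has $\pi^{-1}(\{t=0\})=\Delta(U)$, whence $\mathcal G_t|_{\{0\}\times U}\cong\big(\pi^{*}\mathcal G_t\big)|_{\Delta(U)}\cong\mathcal H|_{\Delta(U)}=\Delta^{*}q_X^{*}\mathcal G'=(q_X\circ\Delta)^{*}\mathcal G'\cong\mathcal G$, which is (ii).
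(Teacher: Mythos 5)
Your outline reproduces the overall architecture of the paper's argument (which is the nice-triples method of [Pan1]): spread the data out to $X$, build a relative curve $q_U\colon\mathcal X\to U$ with a section $\Delta$ and an essentially smooth map to $X$, trivialize the pulled-back bundle away from a locus finite over $U$, and glue with the trivial bundle along an elementary (Nisnevich) square over $\mathbb A^1_U$ using the Colliot-Th\'el\`ene--Ojanguren patching. Your Step 3 is essentially the paper's final step, and your recipe for the monic $h$ with $h(1)$ a unit is a legitimate variant of the paper's use of property (e) of its Theorem ``ElementaryNisSquare''. (One slip there: the claim $\pi^{-1}(\{t=0\})=\Delta(U)$ does not follow from your (a) and (c) and is false in the paper's construction, where $\sigma^{-1}(\{0\}\times U)=\Delta'(U)\sqcup\mathcal D$; but it is also not needed, since your chain of isomorphisms only uses $\pi\circ\Delta=$ the zero section.)

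The genuine gap is that your Step 2 --- which you yourself flag as ``the hard part'' --- is asserted, not proved, and it is precisely where all the content of the theorem over a finite field lies. Concretely, the paper has to (a) construct an elementary fibration through the prescribed closed points and turn it into a nice triple $(q_U\colon\mathcal X\to U,f,\Delta)$ with a finite surjective $U$-morphism to $\mathbb A^1\times U$; (b) produce the identification of $q_X^*(\bG_X)$ with the constant group scheme $q_U^*(\bG_U)$ restricting to the identity over $\Delta(U)$ --- this is not automatic data of the triple but a separate theorem (``equating group schemes''), proved by passing to a finite \'etale cover $\theta\colon\mathcal X'\to\mathcal X$ with a section over $\Delta(U)$, whose existence over finite residue fields rests on Poonen's Bertini theorem (the analogue of \cite[Lemma 7.2]{OP2}); and (c) construct on the new curve a finite surjective $U$-morphism $\sigma\colon\mathcal X'\to\mathbb A^1\times U$ that is a closed embedding on the bad locus $\{f'=0\}$, \'etale near $\{f'=0\}\cup\Delta'(U)$, and separates it from $\{t=0\}$ and $\{t=1\}$. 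Over a finite field (c) is obstructed by the fact that a finite scheme over $k$ need not embed in $\mathbb A^1_k$ at all; the paper handles this by degree-counting and the two-coprime-primes device (conditions $(1^*),(2^*)$, Lemma \ref{F1F2} and its relatives), which is not the same as your suggestion to ``prove the presentation over two coprime-degree extensions and descend''. Without carrying out (a)--(c), the proposal is a correct plan but not a proof of Theorem \ref{MainHomotopy}.
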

If the field $k$ is infinite a weaker result is proved in
\cite[Thm.1.2]{PSV}.

\begin{thm}
\label{th:psv}
Let $k$ be a field. Let $\mathcal O$ be the semi-local ring of finitely many {\bf closed points} on a
$k$-smooth irreducible affine $k$-variety $X$.
Set $U=\spec \mathcal O$.
Let $\bG$ be {\bf a simple simply-connected} group scheme over $U$.
Let $\cE_t$ be a principal $\bG$-bundle over the affine line $\mathbb A^1_U=\spec {\mathcal O}[t]$, and let $h(t)\in \mathcal O[t]$ be a monic polynomial.
Denote by $(\mathbb A^1_U)_h$ the open subscheme in $\mathbb A^1_U$ given by $h(t)\ne0$ and assume that
the restriction of $\cE_t$ to $(\mathbb A^1_U)_h$ is a trivial principal $\bG$-bundle.
Then for each section $s:U\to\mathbb A^1_U$ of the projection $\mathbb A^1_U\to U$ the $\bG$-bundle $s^*\cE_t$ over $U$ is trivial.
\end{thm}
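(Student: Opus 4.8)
The plan is to reduce Theorem~\ref{th:psv} to a Horrocks-type triviality statement for $\bG$-torsors over the affine line and to establish that by patching along the finite subscheme cut out by $h$. What makes the argument work over a finite field is, on the one hand, Lang's theorem --- which trivializes torsors over the relevant complete-local rings because the residue fields of $\mathcal O$ are finite, and which here plays the role that ``choosing a section avoiding $\{h=0\}$'' plays over infinite fields --- and, on the other hand, the hypothesis that $\bG$ is simple simply-connected, which is what makes the ensuing Birkhoff-type factorization obstruction-free.

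First I would normalize the section. A section $s\colon U\to\mathbb A^1_U$ is $t\mapsto a$ for a unique $a\in\mathcal O$, and the $U$-automorphism $t\mapsto t+a$ of $\mathbb A^1_U$ sends $s$ to the zero section and the monic polynomial $h(t)$ to the monic polynomial $h(t+a)$, matching $(\mathbb A^1_U)_h$ with $(\mathbb A^1_U)_{h(t+a)}$. Hence we may assume $s$ is the zero section; it then suffices to prove the (a priori stronger) assertion that $\cE_t$ is trivial over all of $\mathbb A^1_U=\Spec\mathcal O[t]$ and to restrict to $\{0\}\times U$.

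Write $R=\mathcal O[t]$, let $Z=V(h)\subset\mathbb A^1_U$, which is finite and flat over $U$ of degree $\deg h$ because $h$ is monic, and let $\widehat R$ be the $h$-adic completion of $R$, so that $h$ lies in the Jacobson radical of $\widehat R$, the ring $\widehat R$ is semi-local with $\Spec(\widehat R/h)=Z$, and $\bigl(\Spec R_h,\ \Spec\widehat R\bigr)$ is a formal cover of $\mathbb A^1_U$ in the style of Beauville--Laszlo with overlap $\Spec\widehat R_h=\Spec\widehat R[1/h]$. By invariance of étale $H^1$ along the Henselian pair $(\widehat R,(h))$ one has $H^1_{\et}(\widehat R,\bG)\cong H^1_{\et}(Z,\bG)$; here I would trivialize $\cE_t|_{\widehat R}$ by appealing to Lang's theorem over the (finite) residue fields of $Z$, after a geometric normalization that realizes $Z$ in good position inside a smooth relative curve over $U$ --- a ``nice triple''-type step. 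Since $\cE_t$ is trivial over $R_h$ by hypothesis, the patching exact sequence of pointed sets
\[
  \bG(R_h)\times\bG(\widehat R)\ \longrightarrow\ \bG(\widehat R_h)\ \longrightarrow\ H^1_{\et}(R,\bG)\ \longrightarrow\ H^1_{\et}(R_h,\bG)\times H^1_{\et}(\widehat R,\bG)
\]
then exhibits the class of $\cE_t$ as the image of a single element $g\in\bG(\widehat R_h)$, so that $\cE_t$ is trivial over $\mathbb A^1_U$ precisely when $g$ can be written $g=g_0\,g_\infty$ with $g_0\in\bG(R_h)$ and $g_\infty\in\bG(\widehat R)$.

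This factorization is the crux, and I expect it to be the main obstacle. On each complete-local component of $\widehat R$ a Cartan/Bruhat decomposition presents $g$, modulo the two admissible factors, as a cocharacter evaluated at the uniformizer $h$; the obstruction to erasing that cocharacter factor sits in a quotient of $\pi_1(\bG)$, which vanishes since $\bG$ is semi-simple simply-connected, and what remains is a term of the form $h^{\lambda}$, a regular invertible function on $\Spec R_h$, which can therefore be absorbed into $g_0$. Two points call for real work here: $\bG$ need not be split, so one trivializes $\bG$ over a finite étale cover of $\widehat R$, performs the factorization there, and descends --- leaning again on the good behaviour of non-abelian $H^1$ and of non-stable $K_1$ for simply-connected groups; and $\widehat R_h$ is a product of complete semi-local rings rather than a field, so one needs the factorization in a relative form, which is where the root-subgroup (partial-fraction) generation of simple simply-connected groups enters. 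Once the factorization is in hand, $\cE_t$ is trivial over $\mathbb A^1_U$, hence $s^*\cE_t$ is trivial for every section $s$, which is the assertion of the theorem.
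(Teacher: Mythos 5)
Your opening reduction already breaks the argument: you propose to prove the \emph{a priori} stronger statement that $\cE_t$ is trivial over all of $\mathbb A^1_U$, but that statement is false in general. It holds for \emph{isotropic} simple simply--connected $\bG$ (this is the Horrocks--type theorem of Panin--Stavrova--Vavilov), and there are counterexamples for anisotropic $\bG$ over $\mathcal O$ (e.g.\ built from $\mathbf{SL}_1$ of an Azumaya algebra that is division over $k(X)$): one can have $\cE_t$ trivial over $(\mathbb A^1_U)_h$ yet nontrivial over $\mathbb A^1_U$. Theorem~\ref{th:psv} deliberately asserts only the triviality of $s^*\cE_t$, and the paper's proof is architected around this: it adjoins $s(U)$ to $Z$ (setting $Z=\{h=0\}\cup s(U)$), extends $\cE_t$ to a bundle $\cG$ on $\P^1_U$ trivial off $Z$, and then constructs (Proposition~\ref{SchemeY}) a finite \'etale $Y\subset\mathbb A^1_U$ with $Y\cap Z=\emptyset$, $\bG_Y$ \emph{quasi-split}, and $\Pic(\P^1_u-Y_u)=0$, so that Theorem~\ref{MainThm2} gives triviality of $\cG$ on $\P^1_U-Y\supset s(U)$. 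The quasi-splitness is achieved only after pulling back to $Y$; nothing forces $\bG$ itself to be isotropic, which is exactly why your Birkhoff factorization step fails: the decomposition of $\bG(\widehat R_h)$ into $\bG(R_h)$-orbits of cocharacters requires root subgroups, i.e.\ isotropy, and the vanishing of $\pi_1(\bG)$ only kills a component-group obstruction, not the failure of the orbits to cover. Your suggestion to split $\bG$ on a finite \'etale cover of $\widehat R$ and ``descend'' the factorization does not work for a nonabelian group.

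Two further steps are unjustified even granting the framework. First, $Z=V(h)$ is finite \emph{flat over} $U$, hence has the same Krull dimension as $U$; it is not a finite set of points with finite residue fields, so Lang's theorem does not trivialize $\cE_t|_{\widehat R}\cong\cE_t|_Z$ --- indeed $\cE_t$ is only known to be trivial where $h\neq0$, and its restriction to $Z$ is precisely what one cannot control. Second, the genuinely finite-field content of the paper --- choosing Borel subgroups in the fibres $\bG_{u_i}$ (possible because $k(u_i)$ is finite), cutting a finite \'etale multisection $Y'$ of the scheme of Borels through them via Poonen's Bertini theorem (Lemma~\ref{OjPan_1}), and embedding a twist of $Y'$ into $\mathbb A^1_U$ by the degree-counting Lemma~\ref{F1F2} so as to miss $Z$ and kill $\Pic(\P^1_u-Y_u)$ --- has no counterpart in your proposal. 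Without that construction there is no locus over which the group becomes quasi-split, and the patching/factorization machinery (which in the paper is delegated to Theorem~\ref{MainThm2}, i.e.\ to \cite[Thm.~3]{FP}, performed along $Y$, not along $V(h)$) cannot be started.
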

If the field $k$ is infinite this result is proved in
\cite[Thm.2]{FP}.

\begin{thm}{\rm{[Pan2]}}
\label{MainThmGeometric}
Let $k$ be a field. Assume that for any irreducible $k$-smooth affine variety $X$ and any
finite family of its closed points $x_1, x_2,\dots, x_n$ and the semi-local $k$-algebra
$\mathcal O:= \mathcal O_{X,x_1, x_2,\dots, x_n}$
and all semi-simple simply connected reductive $\mathcal O$-group schemes $H$
the pointed set map
$$H^1_{\text{\rm\'et}}(\mathcal O, H) \to H^1_{\text{\rm\'et}}(k(X), H),$$
\noindent
induced by the inclusion of $\mathcal O$ into its fraction field $k(X)$, has trivial kernel.

Then for any regular semi-local domain $\mathcal O$ of the form
$\mathcal O_{X,x_1, x_2,\dots, x_n}$ above
and any reductive $\mathcal O$-group scheme $G$
the pointed set map
$$H^1_{\text{\rm\'et}}(\mathcal O, G) \to H^1_{\text{\rm\'et}}(k(X), G),$$
\noindent
induced by the inclusion of $\mathcal O$ into its fraction field $k(X)$, has trivial kernel.
\end{thm}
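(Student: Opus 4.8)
The plan is to reduce the statement for a general reductive group scheme $G$ over $\mathcal O = \mathcal O_{X,x_1,\dots,x_n}$ to the hypothesis, which concerns only \emph{semi-simple simply connected} group schemes. The standard device is the exact sequence of $\mathcal O$-group schemes attached to $G$: writing $G^{sc}$ for the simply connected central cover of the derived group $G^{der}$ and $C$ for the radial torus $G/G^{der}$, one has the natural maps $G^{sc} \to G \to C$. One first treats the torus quotient and the toral part of the centre using the fact that the Grothendieck--Serre conjecture is already known for tori over regular semi-local rings (by Colliot-Th\'el\`ene--Sansuc and its extensions), so that the obstruction to descending a trivialization of a $G$-bundle from $k(X)$ to $\mathcal O$ lives, after a suitable twist, in the cohomology of the semi-simple simply connected group $G^{sc}$.

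First I would fix a $G$-bundle $\mathcal G$ over $\mathcal O$ that is trivial over $k(X)$, and consider its image $\bar{\mathcal G}$ under $G \to C$. Since $C$ is an $\mathcal O$-torus and $H^1_{\text{\'et}}(-, C)$ satisfies Grothendieck--Serre for regular semi-local rings, $\bar{\mathcal G}$ is trivial over $\mathcal O$; lifting the trivializing section and twisting $G$ by a cocycle representing $\mathcal G$, one reduces to the case where $\mathcal G$ comes from the semi-simple group $G^{der}$. Next I would use the central isogeny $1 \to \mu \to G^{sc} \to G^{der} \to 1$ and the associated connecting map in non-abelian cohomology: the obstruction to lifting $\mathcal G$ to a $G^{sc}$-bundle lies in $H^2_{\text{\'et}}(\mathcal O, \mu)$, and comparing with the situation over $k(X)$ — where the bundle is trivial and hence lifts — one must show this obstruction vanishes. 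This is where purity for $H^2_{\text{\'et}}(-, \mu)$ over regular semi-local rings (again in the finite-coefficient, hence torsion, case this is classical) is invoked, so that an obstruction class dying over the generic point already dies over $\mathcal O$. Having lifted $\mathcal G$ to a $G^{sc}$-bundle $\mathcal G^{sc}$ — which is automatically generically trivial up to an element coming from $H^1(k(X), \mu)$, a term one again controls via the torus/finite-group case — the hypothesis of the theorem applied to $G^{sc}$ (decomposed into its simple simply connected factors via Weil restriction along finite \'etale covers of $\mathcal O$, which are themselves semi-local rings of the same type) forces $\mathcal G^{sc}$ to be trivial over $\mathcal O$, whence $\mathcal G$ is trivial.

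The main obstacle I anticipate is the bookkeeping of the \emph{centre}: $\mu$ is a finite $\mathcal O$-group scheme of multiplicative type that need not be smooth in bad characteristic, and the group scheme $C = G/G^{der}$ is a torus only after one is careful about the difference between $G/G^{der}$ and the coradical; moreover the twisting argument must be carried out with inner forms, so one must check that the hypothesis, stated for \emph{all} semi-simple simply connected $\mathcal O$-group schemes, indeed applies to the twisted forms $\,^{\mathcal G}G^{sc}$ that arise. Controlling the interplay between $H^1$ and $H^2$ with these possibly non-smooth finite coefficients — in particular establishing the rigidity/purity statements in the generality of regular semi-local rings containing a field, using \'etale cohomology with appropriate care at the characteristic — is the technical heart of the reduction. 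Once these pieces are in place, the passage from $G$ to $G^{sc}$ is formal, and Theorems \ref{MainHomotopy} and \ref{th:psv} are not needed here: they enter only in the separate deduction of Theorem \ref{MainThm1} from this result together with the simply connected case.
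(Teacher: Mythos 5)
First, a point of reference: the paper you are working from does not actually prove Theorem \ref{MainThmGeometric}; it is quoted from the companion preprint [Pan2], and the introduction only records that the reduction from general reductive $\bG$ to the simply connected case is carried out there by means of \emph{two purity theorems} ([Pan2, Thm.~1.1, Thm.~1.2]), whose content is visible in this paper as Theorem \ref{Aus_Buksbaum_3} (exactness of $\mathbf C(\mathcal O)/\mu(\bG(\mathcal O))\to \mathbf C(K)/\mu(\bG(K))\to\bigoplus_{\mathfrak p}\dots$) and Theorem \ref{PurityForSubgroup} (the analogous sequence for ${\cal F}(R)=H^1_{\text{fppf}}(R,Z)/\operatorname{Im}\delta_{\pi,R}$). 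Your overall dévissage (coradical torus, central isogeny from $G^{sc}$, decomposition into Weil restrictions of simple factors) is the same skeleton, so the strategy is right.

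However, two steps in your sketch hide the actual difficulty, and as written they do not go through. (a) After killing the image of $[\mathcal G]$ in $H^1(\mathcal O,C)$ you lift to a $G^{\mathrm{der}}$-torsor, but this lift is \emph{not} generically trivial: over $K$ it lies only in the image of the connecting map $\delta_K\colon C(K)\to H^1(K,G^{\mathrm{der}})$. To replace it by a generically trivial lift you must modify by an element of $C(\mathcal O)$ hitting the right class in $C(K)/\mu(G(K))$, and the statement that an unramified class of $C(K)/\mu(G(K))$ comes from $C(\mathcal O)/\mu(G(\mathcal O))$ is exactly the purity theorem [Pan2, Thm.~1.1]; it is not a formal consequence of Colliot-Th\'el\`ene--Sansuc for tori. (b) The same problem recurs one level up: even granting that the $H^2_{\text{fppf}}(\mathcal O,\mu)$ obstruction vanishes, your chosen $G^{sc}$-lift is only generically induced from a class in $H^1_{\text{fppf}}(K,\mu)$, and $H^1_{\text{fppf}}(\mathcal O,\mu)\to H^1_{\text{fppf}}(K,\mu)$ is not surjective, so you cannot simply ``control this via the finite-group case'' and adjust the lift. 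The standard repair --- and the one the cited source uses --- is to embed $\mu$ (or $Z$) into a quasi-trivial torus $T^+=\R_{A/\mathcal O}(\mathbb G_{m,A})$, pass to $G^{+}=(G^{sc}\times T^{+})/\mu$ with $H^1(\cdot,T^{+})=\{\ast\}$, and run the argument through the functor ${\cal F}(R)=H^1_{\text{fppf}}(R,\mu)/\operatorname{Im}\delta_{\pi,R}\cong T(R)/\mu^{+}(G^{+}(R))$, for which the purity sequence is available. Until you supply these two purity inputs (or an equivalent), the reduction is incomplete; your closing remark that Theorems \ref{MainHomotopy} and \ref{th:psv} are not needed here is correct.
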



Theorem \ref{th:psv} is an easy consequence of
Theorem
\ref{MainThm2}
proven in section
\ref{Section_Theorem_MainThm2}
and of
Proposition \ref{SchemeY}
proven in section \ref{SchemeY_section}.
To state Theorem \ref{MainThm2} recall a notion.
Let $V$ be a semi-local scheme. We will call a simple $V$-group scheme
{\it quasi-split} if its restriction to each connected component of $V$ contains {\bf a Borel subgroup scheme}.

\begin{thm}
\label{MainThm2}
Let $k$ be a {\bf finite} field.
Let
$U$ and $\bG$ be as in Theorem~\ref{th:psv}.
Let $Z\subset\mathbb P^1_U$ be a closed subscheme finite over $U$.
Let $Y\subset\mathbb P^1_U$ be a closed subscheme finite and
\'etale over $U$ and such that\\
(i) $\bG_Y:=\bG\times_U Y$ is quasi-split, \\
(ii) $Y\cap Z=\emptyset$ and $Y \cap \{\infty\}\times U=\emptyset= Z \cap \{\infty\}\times U$, \\
(iii) for any closed point $u \in U$ one has $Pic(\mathbb P^1_u - Y_u)=0$, where $Y_u:=\mathbb P^1_u\cap Y$.\\
Let $\mathcal G$ be a~principal $\bG$-bundle over
$\mathbb P^1_U$ such that its restriction to
$\mathbb P^1_U- Z$ is trivial.
Then the restriction of $\mathcal G$ to
$\mathbb P^1_U-Y$ is also trivial.\\
In particular, the principal $\bG$-bundle $\mathcal G$ is trivial locally for the Zarisky topology.
\end{thm}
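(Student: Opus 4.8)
The plan is to reduce the triviality of $\mathcal G$ on $\mathbb P^1_U - Y$ to a patching argument on the line, using the two closed subschemes $Z$ (where $\mathcal G$ might be nontrivial but which is finite over $U$) and $Y$ (where $\bG$ is quasi-split and the complement has trivial Picard fibrewise). First I would set up the \emph{Horrocks-type} or Quillen--Suslin-type machinery on $\mathbb P^1_U$: since $\mathcal G$ is trivial on $\mathbb P^1_U - Z$ and $Z$ is finite over the semi-local base $U$, one can find an affine open $\mathbb A^1_U = \mathbb P^1_U - \{\infty\}\times U$ containing $Z$ (condition (ii) guarantees $\infty\notin Z$), and $\mathcal G|_{\{\infty\}\times U}$ is trivial because it is the restriction of the trivial bundle on $\mathbb P^1_U - Z$. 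So $\mathcal G$ is glued from a trivial bundle on $\mathbb P^1_U - Z$ and a bundle on a formal/étale neighbourhood of $Z$, the gluing datum being an element of $\bG$ over the "punctured" neighbourhood; equivalently $\mathcal G$ corresponds to a class in $H^1$ supported near $Z$.

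The key step is to \emph{move the support} of this class from $Z$ to $Y$. This is where I would use that $\bG_Y$ is quasi-split: a quasi-split simple group scheme over the semi-local $Y$ has trivial (or at least controllable) $H^1$ over the relevant localized rings, so a $\bG$-bundle built from gluing data along $Y$ that becomes trivial on $\mathbb P^1_U - Y$ is automatically trivial — the content is to show $\mathcal G$ is \emph{isomorphic} to such a bundle. Concretely I expect to invoke a descent/excision statement: the category of $\bG$-bundles on $\mathbb P^1_U$ trivial away from a finite-over-$U$ closed subscheme $D$ depends only on the formal completion along $D$ together with the restriction to $\mathbb P^1_U - D$, and one compares the two decompositions $\mathbb P^1_U = (\mathbb P^1_U - Z)\cup(\text{nbhd of }Z)$ and $\mathbb P^1_U = (\mathbb P^1_U - Y)\cup(\text{nbhd of }Y)$. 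Condition (iii), $\mathrm{Pic}(\mathbb P^1_u - Y_u) = 0$ for every closed point $u$, is precisely what is needed so that the relevant $\bG$-torsor cocycle over $\mathbb P^1_U - Y$, which a priori could be twisted by a line bundle through the maximal torus / the radical, is actually trivializable; this kills the obstruction coming from $\pi_0$ of the affine curve's Picard group and lets one extend a trivialization on a neighbourhood of $Y$ across $\mathbb P^1_U - Y$.

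The main obstacle, I expect, is the passage from the \emph{formal} (or étale-local) triviality near $Y$ to an actual algebraic trivialization on all of $\mathbb P^1_U - Y$: this is a Beauville--Laszlo / affine-Grassmannian type glueing argument for non-abelian $\bG$-cohomology, and over a finite ground field $k$ one cannot use the usual "moving lemma" tricks that require many rational points. I anticipate the finiteness of $k$ forcing an extra argument — passing to a pair of coprime finite field extensions $k'/k$, $k''/k$ of coprime degrees, proving triviality after base change to each, and then using a transfer (corestriction) argument to descend to $k$, exactly as one does elsewhere in the finite-field case of Grothendieck--Serre. Finally, the "in particular" clause is immediate: once $\mathcal G$ is trivial on $\mathbb P^1_U - Y$, restricting along the zero section $U \hookrightarrow \mathbb A^1_U \subset \mathbb P^1_U - Y$ (legitimate since $Y\cap\{0\}\times U$ is empty after, if necessary, an automorphism of $\mathbb P^1_U$ moving $Y$ off the section — or directly because $Y$ is finite over $U$ and $U$ is semi-local so we can choose the section to avoid $Y$) shows the original bundle on $U$ is Zariski-locally trivial.
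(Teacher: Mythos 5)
Your overall shape --- glue $\mathcal G$ along the two disjoint finite-over-$U$ loci $Z$ and $Y$, and move the nontrivial gluing datum from $Z$ to $Y$, where quasi-splitness of $\bG_Y$ and the condition $\Pic(\P^1_u-Y_u)=0$ let you kill it --- is exactly the shape of the proof of \cite[Thm.~3]{FP}, and the paper's proof of Theorem~\ref{MainThm2} consists precisely of observing that this argument goes through verbatim once one supplies the single input that fails over a finite field. That input is what your sketch misses. In \cite{FP} the hypothesis that each $Y_u$ contains a $k(u)$-rational point is used (in \cite[Lemma~5.21]{FP}) to show that the closed-fibre bundle $\mathcal G_u$ is trivial on $\P^1_u-Y_u$; here $Y_u$ need not contain a rational point, and the paper replaces that step by a direct argument: $\mathcal G_u$ is trivial on $\mathbb A^1_u-Z_u$, hence by Gille's theorem on torsors over the line over a field (\cite[Cor.~3.10(a)]{GilleTorseurs}, applied twice, $\bG$ being simple simply connected) it is Zariski-locally trivial on $\P^1_u$ and in fact trivial on $\P^1_u-Y_u$. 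Nothing in your proposal identifies or supplies this fibrewise statement, which is the only genuinely new content of the proof.

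The substitute you do propose for the finite-field difficulty --- prove triviality after two finite extensions of coprime degrees and descend by a transfer/corestriction --- does not work: there is no corestriction on non-abelian $H^1$, so triviality over two coprime-degree extensions does not imply triviality over $k$. The coprime-degrees device does appear in this paper, but in Lemma~\ref{F1F2} and Proposition~\ref{SchemeY}, where it is used to \emph{construct} a $Y$ admitting a closed embedding into $\mathbb A^1_U$ and satisfying $\Pic(\P^1_u-Y_u)=0$; in Theorem~\ref{MainThm2} such a $Y$ is already given as a hypothesis, so that device plays no role here. (A minor point: the final clause of the theorem follows simply because $\P^1_U-Y$ and $\P^1_U-Z$ form a Zariski open cover of $\P^1_U$ on which $\mathcal G$ is trivial; no choice of section avoiding $Y$ is needed.)
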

If the field $k$ is infinite then a stronger result is proved in
\cite[Thm.3]{FP}.

The article is organized as follows.
Theorem
\ref{MainThm2}
proven in section
\ref{Section_Theorem_MainThm2}.
In Section a useful lemma is proved, namely Lemma \ref{F1F2}.
In Section \ref{SchemeY_section} Theorem~\ref{th:psv} is proved
using the theorem \ref{MainThm2}.
Theorem \ref{MainThm1} is proved in Section \ref{Reducing_MainThm1_to_two_others}
as a consequence of Theorems~\ref{MainHomotopy}, \ref{th:psv}, \ref{MainThmGeometric}.

Describe informally the method of the proof of Theorem \ref{MainThm1}
for the geometric case and even for
the case when the ring $R$ is the semi-local ring of finitely many closed points
on an affine $k$-smooth variety $X$. We may and will assume that the group scheme
$\bf G$ is a reductive group scheme over $X$. We may and will assume that
the principal $\bf G$-bundle $\mathcal G$ is defined over $X$ and is trivial over $X-Z$
for a non-empty closed subset $Z$ of $X$. A trivialization of $\mathcal G$ over $X-Z$
"defines" a $\bf G$-bundle $\tilde {\mathcal G}$ over the motivic space $X/(X-Z)$
such that $p^*(\tilde {\mathcal G})=\mathcal G$ for the projection $X\to X/(X-Z)$.
Recall that the motivic space $X/(X-Z)$ is just the Nisnevich sheaf $X/(X-Z)$.

Let $U=Spec(R)$ and $can: U\to X$ be the canonical morphism.
We construct a closed subset $Y \subset \mathbb A^1_U$ finite and \'{e}tale over $U$ and
a morphism of Nisnevich sheaves (= motivic spaces)
$\theta: \mathbb A^1_U - Y \to X/(X-Z)$
such that \\
(1) $\theta^*(\tilde {\mathcal G})$ is the trivial $\bf G$-bundle,\\
(2) $1\times U \subset \mathbb A^1_U - Y$, \\
(3) the morphism $\theta|_{0\times U}$ coincides with the one
$U\xrightarrow{can} X \xrightarrow{p} X/(X-Z)$.\\
Given that morphism $\theta$ one has a chain of equalities
$$\mathcal G|_U=can^*(\mathcal G)=can^*(p^*(\tilde {\mathcal G}))=\theta^*(\tilde {\mathcal G})|_{1\times U}.$$
The properties (1) and (2) yield the triviality of $\mathcal G|_U$. {\it This proves Theorem
\ref{MainThm1} for the case of simply-connected $\bf G$.
The simply-connectness of ${\bf G}$ is used to find the desired closed subset $Y$ in $\mathbb A^1_U$.
}
Two purity theorems
\cite[Thm.1.1, Thm.1.2]{Pan2}
are used to reduce the case of general reductive ${\bf G}$
to the case of simply-connected $\bf G$.
Ones the proof of Theorem \ref{MainThm1} is completed in Section \ref{Reducing_MainThm1_to_two_others}
we derive from Theorem \ref{MainThm1} two short exact sequences
(see Theorems \ref{Aus_Buksbaum_3} and \ref{PurityForSubgroup}).

We work over a finite base field $k$ in the present paper.
Hence we use systematically results of \cite{Poo1} and \cite{Poo2}.
Nevetheless, there are other vast difficulties.
Namelly, often
for a closed subscheme $S$ of an affine $k$-smooth curve $C$ there is no closed embedding
$S$ into $\mathbb A^1_k$.
Indeed, such a finite scheme $S$ can be embedded in $\mathbb A^1_k$ if and only if
for any finite field extension $l/k$ the amount of the $l$-rational points of $S$
is at most the amount of $l$-rational points in $\mathbb A^1_k$.
To overcome those difficulties we replace systematically regular morphisms
with appropriate variant of finite correspondences
(see for instance Lemma \ref{F1F2}.


A good reference for the history of the topic is
\cite{FP}.

The author thanks A.~Suslin for his interest in the topic of the present article. He
also thanks to A.Stavrova for paying my attention to Poonen's works on Bertini type theorems
for varieties over finite fields. He thanks D.Orlov for useful comments concerning
the weighted projective spaces tacitely involved in the construction of elementary fibrations.
He thanks M.Ojanguren for many inspiring ideas arising from our joint works with him.

\section{Proof of Theorem~\ref{MainThm2}}
\label{Section_Theorem_MainThm2}
\begin{proof}
Let $Y'$ be a semi-local scheme. Recall that a simple $Y'$-group scheme
{\it quasi-split} if its restriction to each connected component of $Y'$ contains {\bf a Borel subgroup scheme}.
Let $Y$ be a semi-local scheme. Recall that a simple $Y$-group scheme {\it isotropic},
if its restriction to each connected component of $Y$ contains a proper parabolic subgroup scheme.
Any quasi-split $Y'$-group scheme is isotropic.

Clearly, the closed subscheme $Y\subset \Pro^1_U$ satisfies all the hypotheses
of the Theorem 3 from \cite{FP}, possibly, except of one. Namely,
it is not necessary that for any closed point $u\in U$ the scheme
$Y_u \subset \Pro^1_u$
contains a $k(u)$ rational point.
However, $Y$ satisfies the following property:

\begin{itemize}
\item
For any closed point $u\in U$ let
$\mathcal G_u:= \mathcal G\times_U u$
be the principal $\bG_u$-bundle, where
$\bG_u=\bG\times_U u$ is the $k(u)$-group scheme.
Then $\mathcal G_u|_{\Pro^1_u-Y_u}$ is trivial.
\end{itemize}
This is exactly the property of $Y$, which is used to
prove the lemma 5.21 from
\cite{FP}.
With this property of $Y$ in hand the proof
of Theorem \ref{MainThm2} {\it literally repeats}
the proof of \cite[Thm.3]{FP}.

So, to complete the proof of Theorem \ref{MainThm2} it remains
to check the latter property of $Y$.
The $\bG_u$-bundle $\mathcal G_u$ is trivial over $\mathbb A^1_u-Z_u$. Thus,
by~\cite[Corollary~3.10(a)]{GilleTorseurs}
it is trivial locally for Zariski topology on $\mathbb P^1_u$.
Using again
\cite[Corollary~3.10(a)]{GilleTorseurs},
we see that $\mathcal G_u$ is trivial over $\P^1_u-Y_u$.
The proof of Theorem \ref{MainThm2} is completed.
\end{proof}

\begin{rem}\label{comment}
We recommend to the reader take a look over arguments given
in the outline of the proof of \cite[Theorem 3]{FP}
(see \cite[Subsection 5.1]{FP}).
\end{rem}

\section{A useful lemma}\label{Appendix}
We will need the following lemma
\begin{lem}
\label{F1F2}
Let $U$ be as in the Proposition \ref{SchemeY}.
Let $Z\subset\mathbb A^1_U$ be a closed subscheme finite over $U$.
Let $Y^{\prime} \to U$ be a finite \'{e}tale morphism such that
for any closed point $u_i$ in $U$ the fibre $Y^{\prime}_{u_i}$ of $Y^{\prime}$ over $u_i$
contains a $k(u_i)$-rational point. Then there are finite field extensions
$k_1$ and $k_2$ of the finite field $k$ such that \\
(i) the degrees $[k_1: k]$ and $[k_2: k]$ are coprime,\\
(ii) $k(u_i) \otimes_k k_r$ is a field for $r=1$ and $r=2$,\\
(iii) the degrees $[k_1: k]$ and $[k_2: k]$ are strictly greater than any of the degrees
$[k(z): k(u)]$, where $z$ runs over all closed points of $Z$,\\
(iv) there is a closed embedding of $U$-schemes
$Y^{\prime\prime}=((Y^{\prime}\otimes_k k_1) \coprod (Y^{\prime}\otimes_k k_2)) \xrightarrow{i} \mathbb A^1_U$,\\
(v) for $Y=i(Y^{\prime\prime})$ one has $Y \cap Z = \emptyset$,\\
(vi) for any closed point $u_i$ in $U$ one has
$Pic(\P^1_{u_i}-Y_{u_i})=0$.
\end{lem}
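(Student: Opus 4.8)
The plan is to build the two auxiliary field extensions $k_1,k_2$ first, using only the combinatorial/arithmetic constraints (i)--(iii), and only afterwards to produce the closed embedding $i$ and verify the geometric conditions (iv)--(vi). Since $k$ is finite, for every integer $n\ge 1$ there is a unique extension $k_n/k$ of degree $n$, and $k(u_i)\otimes_k k_n$ is a field precisely when $\gcd(n,[k(u_i):k])=1$; moreover all the residue fields $k(u_i)$ (over the finitely many closed points $u_i$ of $U$) and all the residue fields $k(z)$ (over the finitely many closed points of $Z$) have degrees over $k$ lying in some finite set, hence are bounded by some $N$. So I would choose two distinct primes $p_1,p_2$, both $>N$ and both not dividing any $[k(u_i):k]$, and set $k_r = $ the degree-$p_r$ extension of $k$ and $[k_r:k]=p_r$. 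Conditions (i), (ii), (iii) are then immediate: coprimality because $p_1\ne p_2$ are prime, the tensor-field condition because $p_r$ is coprime to each $[k(u_i):k]$, and the degree inequality because $p_r>N\ge [k(z):k]\ge [k(z):k(u)]$.

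Next I would produce the closed embedding $i$ in (iv). Write $Y''=(Y'\otimes_k k_1)\amalg(Y'\otimes_k k_2)$; this is finite and \'etale over $U$, and I want to embed it into $\mathbb A^1_U$ as a $U$-scheme. The standard device is that a finite $U$-scheme $W$ embeds into $\mathbb A^1_U$ as soon as $\cO_U\to\cO_W$ is generated by one element, i.e.\ $W\hookrightarrow \mathbb A^1_U$ exists iff for each closed point $u_i$ the fibre $W_{u_i}$ embeds into $\mathbb A^1_{k(u_i)}$ compatibly (Nakayama reduces the global problem to the semi-local one, so it suffices to find a primitive element fibrewise and lift). Over a closed point $u_i$, $Y''_{u_i}=(Y'_{u_i}\otimes_{k(u_i)}(k(u_i)\otimes_k k_1))\amalg(\cdots\otimes_k k_2)$, and by (ii) $k(u_i)\otimes_k k_r=:l_{i,r}$ is a field, a degree-$p_r$ extension of $k(u_i)$. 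Since $Y'_{u_i}$ contains a $k(u_i)$-rational point by hypothesis, $Y'_{u_i}=\Spec k(u_i)\amalg Y'^{\,0}_{u_i}$, and each factor of $Y''_{u_i}$ is a finite separable $k(u_i)$-algebra, hence a finite product of finite separable field extensions of $k(u_i)$. Such an algebra embeds into $\mathbb A^1_{k(u_i)}$ if and only if, for every finite extension $m/k(u_i)$, the number of factors isomorphic to a given field does not exceed $\#\mathbb A^1_{k(u_i)}(m)=\#m$; the role of the inequalities (iii) (together with the primality and size of $p_1,p_2$) is exactly to guarantee enough rational points are available so no collision occurs, so I would check this counting fibrewise. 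This is the step I expect to be the main obstacle: one must simultaneously control, over all closed points $u_i$, the multiset of residue fields occurring in $Y''_{u_i}$ against the supply of rational points on the affine line, and then assemble the fibrewise primitive elements into a single global embedding $i$ over the semi-local base $U$.

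Finally I would adjust $i$ to secure (v) and then verify (vi). For (v): the set of $U$-embeddings $Y''\hookrightarrow\mathbb A^1_U$ is, once one exists, acted on by translations $t\mapsto t+a$ and scalings $t\mapsto at$ with $a\in\cO_U$, $a$ a unit; since $U$ is semi-local with residue fields $k(u_i)$ that may be small, I would instead enlarge the target by first embedding into a high-dimensional affine space and projecting generically, or more simply note that $Z\subset\mathbb A^1_U$ is finite over $U$ so its image meets each fibre $\mathbb A^1_{k(u_i)}$ in finitely many closed points, and after the fibrewise choices above one has freedom to translate $Y''_{u_i}$ away from $Z_{u_i}$ because, by (iii), the residue degrees in $Y''_{u_i}$ strictly exceed those in $Z_{u_i}$, so no point of $Y_{u_i}$ can coincide with a point of $Z_{u_i}$ — indeed $Y\cap Z=\emptyset$ is then automatic from (iii) alone, with no further adjustment needed. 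For (vi): over a closed point $u_i$ the divisor $Y_{u_i}\subset\mathbb P^1_{u_i}$ is a finite union of closed points whose residue degrees over $k(u_i)$ are $p_1$ on the first $\#Y'_{u_i}$-indexed copies and $p_2$ on the others; since $\gcd(p_1,p_2)=1$ there is a $\ZZ$-linear combination of these degrees equal to $1$, so the subgroup of $\Pic(\mathbb P^1_{u_i})\cong\ZZ$ generated by the classes of the points of $Y_{u_i}$ is all of $\ZZ$, hence the restriction map $\Pic(\mathbb P^1_{u_i})\to\Pic(\mathbb P^1_{u_i}-Y_{u_i})$ is surjective with the full group killed, giving $\Pic(\mathbb P^1_{u_i}-Y_{u_i})=0$ from the standard excision sequence $\bigoplus_{y\in Y_{u_i}}\ZZ\to\Pic(\mathbb P^1_{u_i})\to\Pic(\mathbb P^1_{u_i}-Y_{u_i})\to 0$. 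This completes the six conditions.
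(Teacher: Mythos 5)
Your overall route is the same as the paper's: pick two distinct large primes $q_1\neq q_2$, base change $Y'$ along the corresponding extensions $k_1=k(q_1)$, $k_2=k(q_2)$, embed the disjoint union fibrewise into $\mathbb A^1_{k(u_i)}$ by a point-counting argument, lift the fibrewise surjection $k(u_i)[t]\to k[Y''_{u_i}]$ to $k[U][t]\to k[Y'']$ by Nakayama, and deduce (v) and (vi) from the fact that every point of $Y_{u_i}$ has degree divisible by $q_1$ or $q_2$ while $Y_{u_i}$ contains one point of degree exactly $q_1$ and one of degree exactly $q_2$ with $\gcd(q_1,q_2)=1$. However, the step you yourself flag as ``the main obstacle'' is precisely the content of the lemma, and you leave it unproven. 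Concretely, for the fibrewise embedding one must show that for every $d\le d(Y'_{u_i})$ and every sufficiently large prime $q$ coprime to $d$, the number of closed points of degree $dq$ on $\mathbb A^1_{k(u_i)}$ is at least $\sharp\{v\in Y'_{u_i}:\deg v=d\}$. The paper establishes this (Lemmas \ref{l:F1F2_preliminary} and \ref{l:F1F2_surjectivity}) by showing $Irr(dq)\ge Irr(q)=(c^q-c)/q\to\infty$, the inequality $\varphi(c^{dq}-1)/d\ge\varphi(c^q-1)$ coming from surjectivity of the norm map between the cyclic groups $k(dq)^\times\to k(q)^\times$. Without some such estimate your choice of primes does not guarantee that a surjection $k(u_i)[t]\to k[Y''_{u_i}]$ exists. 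Note also that your criterion for embeddability is off: what matters is the number of \emph{closed points} of $\mathbb A^1_{k(u_i)}$ with a given residue field (i.e.\ the number of monic irreducibles of that degree), not the number of $m$-rational points $\#m$.

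A second, smaller gap: your bound $N$ covers only the residue degrees occurring in $U$ and in $Z$, but the primes must also strictly exceed every degree $[k(v):k]$ for $v\in Y'$ (the paper's condition $q_j>d(Y'_u)$). This is what guarantees that $k(v)\otimes_k k_j$ is a field for every point $v$ of $Y'$, so that all residue degrees in the $j$-th copy are exactly $d\cdot q_j$; it is then the inequality $d_1q_1\neq d_2q_2$ for $d_1,d_2\le d(Y'_u)<q_1,q_2$ that makes the images of the two copies automatically disjoint and yields a single closed embedding $i_1\sqcup i_2$. With your weaker choice of primes this disjointness, and the clean degree bookkeeping used in (v) and (vi), is not secured. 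Your verification of (vi) is otherwise correct in substance (though the residue degrees in the $j$-th copy are $d'q_j$ for varying $d'$, not uniformly $q_j$; only the point over the $k(u_i)$-rational point of $Y'_{u_i}$ has degree exactly $q_j$), and your observation that (v) follows from (iii) alone agrees with the paper.
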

Let $k$ be a {\bf finite} field. Let $\mathcal O$ be the semi-local ring of finitely many {\bf closed points} on a
$k$-smooth irreducible affine $k$-variety $X$.
Set $U=\spec \mathcal O$. Let ${\bf u}\subset U$ be the set of all closed points in $U$.
For a point $u\in {\bf u}$ let $k(u)$ be its residue field.

\begin{notation}\label{notn: F1F2}
Let $k$ be the finite field of characteristic $p$,
$k'/k$ be a finite field extensions.
Let $c=\sharp(k)$ (the cardinality of $k$).
For a positive integer $r$ let $k'(r)$
be a unique field extension of the degree $r$ of the field $k'$.
Let $\mathbb A^1_{k}(r)$ be the set of all degree $r$ points
on the affine line $\mathbb A^1_{k}$.
Let $Irr(r)$ be the number of the degree $r$ points on
$\mathbb A^1_{k}$.

\end{notation}

\begin{lem}\label{l:F1F2_preliminary}
Let $k$ be the finite field of characteristic $p$,
$c=\sharp(k)$,
$k^{\prime}/k$ be a finite field extension of degree $d$.
Let $q\in \mathbb N$
be {\bf a prime} which is {\bf co-prime} as to the characteristic $p$ of the field $k$,
so to the integer $d$.
Then \\
(1)$Irr(q)=(c^q-c)/q$;\\
(2) the $k$-algebra $k^{\prime}\otimes_{k} k(q)$ is the field $k^{\prime}(q)$;\\
(3) $Irr(dq) \geq Irr(q)$;\\
(4) $Irr(dq)\geq  (c^q-c)/q \gg 0$ for $q\gg 0$.
\end{lem}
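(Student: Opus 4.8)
The plan is to prove Lemma~\ref{l:F1F2_preliminary} by elementary counting of monic irreducibles over finite fields, together with the standard structure theory of finite fields. First I would recall the classical formula for the number of monic irreducible polynomials: if $N_c(n)$ denotes the number of monic irreducible polynomials of degree $n$ over a field with $c$ elements, then $n\,N_c(n)=\sum_{e\mid n}\mu(n/e)\,c^{e}$. Since $Irr(r)=N_c(r)$ (each degree-$r$ closed point of $\mathbb A^1_k$ corresponds to a Galois orbit of size $r$ in $\mathbb A^1_{\bar k}$, i.e.\ to a monic irreducible of degree $r$), for $r=q$ prime the divisors of $q$ are just $1$ and $q$, so $q\,Irr(q)=c^{q}-c$, which is statement (1). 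Note $c^q - c > 0$ since $c\ge 2$, so $Irr(q)$ is a genuine positive integer (equivalently, such polynomials exist), which is the content we will use later.

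For (2), I would argue as follows. The ring $k'\otimes_k k(q)$ is a finite-dimensional commutative $k'$-algebra; since both $k'$ and $k(q)$ are separable over $k$ (all finite extensions of a finite field are separable), the tensor product is a finite \'etale $k'$-algebra, hence a product of finite separable field extensions of $k'$. Its dimension over $k'$ equals $[k(q):k]=q$. On the other hand, $k(q)$ is the splitting field of $x^{c^{q}}-x$ over $k$, with $\mathrm{Gal}(k(q)/k)$ cyclic of order $q$, and $k'$ has degree $d$ over $k$ with $\gcd(d,q)=1$. Since $q$ is prime and does not divide $d$, the compositum $k'\cdot k(q)$ inside $\bar k$ has degree $dq$ over $k$, hence degree $q$ over $k'$, and $k'\cap k(q)=k$; therefore $k'\otimes_k k(q)$ is a \emph{field}, namely the compositum $k'\cdot k(q)$, which (being the unique degree-$q$ extension of $k'$, as finite fields have a unique extension of each degree) is exactly $k'(q)$. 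Alternatively, and perhaps more cleanly: $k'\otimes_k k(q)$ has at most $\gcd([k':k],[k(q):k])=\gcd(d,q)=1$ idempotents beyond $0$ and $1$ forcing indecomposability — more precisely, the number of factors in the primitive idempotent decomposition divides $\gcd(d,q)=1$, so there is exactly one factor, and it is $k'(q)$ by dimension count.

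For (3) and (4), note $dq$ has divisors $1,d,q,dq$ together with the divisors of $d$ strictly between $1$ and $d$; grouping, $dq\,Irr(dq)=\sum_{e\mid dq}\mu(dq/e)c^{e}$. Rather than compute this exactly, I would bound it: the dominant term is $c^{dq}$, and since $d\ge 1$ we at least have $c^{dq}\ge c^{q}$, while all lower-order terms are controlled by a geometric-series estimate of the form $\bigl|\sum_{e\mid dq,\,e<dq}\mu(dq/e)c^{e}\bigr|\le \sum_{e<dq}c^{e}<c^{dq}/(c-1)\le c^{dq}$ when $c\ge 2$ (one can be slightly more careful when $c=2$, using that the next-largest exponent is $dq/q'$ for the smallest prime $q'\mid dq$, hence at most $dq/2$, giving $c^{dq}-O(c^{dq/2})$). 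Thus $dq\,Irr(dq)\ge c^{dq}-\tfrac{c^{dq/2+1}}{c-1}$, which for $q$ large is $\ge c^{q}\ge q\,Irr(q)$ once one also divides through — more simply, $Irr(dq)\ge (c^{dq}-2c^{dq/2})/(dq)\ge (c^{q}-c)/q=Irr(q)$ for $q\gg 0$, giving (3), and the same lower bound tends to infinity, giving (4). The main obstacle — really the only place needing genuine care — is handling the small-characteristic case $c=2$ in the estimates of (3)–(4) and making sure the comparison $Irr(dq)\ge Irr(q)$ is valid for \emph{all} large $q$ rather than asymptotically; this is routine but must be done with explicit constants. Everything else is a direct application of the Gauss/Möbius count and the uniqueness of finite field extensions.
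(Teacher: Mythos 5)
Your proposal is correct in substance but takes a genuinely different route from the paper for parts (3) and (4). Parts (1) and (2) the paper dismisses as ``clear''; your justifications (the M\"obius/Gauss count with $q$ prime for (1), and for (2) the fact that $k'\otimes_k k(q)$ is an \'etale $k'$-algebra of dimension $q$ whose number of factors divides $\gcd(d,q)=1$) are exactly the standard ones. For (3) the paper does not estimate the M\"obius sum at all: it compares the two quantities through the Euler totients $\varphi(c^{dq}-1)$ and $\varphi(c^{q}-1)$, using surjectivity of the norm map $N\colon k(dq)^{\times}\to k(q)^{\times}$ between cyclic groups and the $\mathrm{Gal}(k(dq)/k(q))$-action on a fibre to get $\varphi(c^{dq}-1)\geq d\,\varphi(c^{q}-1)$, and then deduces (4) from (1) and (3). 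Your approach buys an explicit and uniform lower bound $Irr(dq)\geq (c^{dq}-2c^{dq/2})/(dq)$; the paper's buys a monotonicity statement with no numerical estimates. Two points in your write-up need repair. First, the chain $(c^{dq}-2c^{dq/2})/(dq)\geq (c^{q}-c)/q$ is simply false when $d=1$ (the left side is then strictly smaller than the right), so you must dispose of $d=1$ separately, where (3) is the tautology $Irr(q)\geq Irr(q)$. Second, item (3) carries no restriction ``$q\gg 0$'', so proving it only for large $q$ falls short of the statement; fortunately, for $d\geq 2$ your crude bound already suffices for every prime $q\geq 2$ and every $c\geq 2$ (one checks $c^{dq}-2c^{dq/2}\geq d\,c^{q}$ directly, e.g.\ via $2c^{dq/2}\leq 2c^{dq-2}$ for $dq\geq 4$), so this is bookkeeping rather than a missing idea --- and the only consequence the paper actually uses downstream is the asymptotic statement (4).
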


\begin{proof}
The assertions (1) and (2) are clear. The assertion (3) it is equivalent
to the unequality $\varphi(c^{dq}-1)/dq \geq \varphi(c^q-1)/q$. The latter is equivalent
to the one $\varphi(c^{dq}-1)/d \geq \varphi(c^q-1)$.

The norm map $N: k(dq)^{\times} \to k(q)^{\times}$ is surjective.
Both groups are the cyclic groups of orders $c^{dq}-1$ and $c^q-1$ respectively.
Thus for any $c^q-1$-th primitive root of unity $\xi \in k(q)^{\times}$
there is a $c^{dq}-1$-th primitive root of unity $\zeta \in k(dq)^{\times}$
such that $Norm(\zeta)=\xi$. On the other hand, if $N(\zeta)=\xi$
and $\sigma \in Gal(k(dq)/k(q))$, then $N(\zeta^{\sigma})=\xi$.
Thus $\varphi(c^{dq}-1)/d \geq \varphi(c^q-1)$. The assertion (3) is proved.

The assertions (1) and (3) yield the assertion (4).
\end{proof}

\begin{notation}
\label{d(Y_u)}
For any \'{e}tale $k$-scheme $W$
set $d(W)=\text{max} \{ deq_{k} k(v)| v\in W \}$.
\end{notation}

\begin{lem}\label{l:F1F2_surjectivity}
Let $Y_u$ be an \'{e}tale $k$-scheme. For any positive integer $d$ let
$Y_u(d)\subseteq Y_u$ be the subset consisting of points $v\in Y_u$ such that
$deg_{k}(k(v))=d$.
For any prime $q\gg 0$ the following holds: \\
(1) if $v\in Y_u$, then $k(v)\otimes_{k} k(q)$ is the field $k(v)(q)$ of the degree $q$ over $k(v)$; \\
(2) $k[Y_u]\otimes_{k} k(q)=(\prod_{v\in Y_u}k(v))\otimes_{k} k(q) =\prod_{v\in Y_u} k(v)(q)$;\\
(3) there is a surjective
$k$-algebra homomorphism
\begin{equation}\label{eq:first_surjectity}
\alpha: k[t]\to k[Y_u]\otimes_{k} k(q)
\end{equation}
\end{lem}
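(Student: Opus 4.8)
The plan is to settle (1) and (2) by standard finite-field algebra and to prove (3) by counting monic irreducible polynomials; concretely everything will hold for every prime $q$ with $q>d(Y_u)$ that is moreover large enough for the estimate used in (3). First, for (1): fix $v\in Y_u$ and put $m=\deg_{k}k(v)\le d(Y_u)<q$, so $\gcd(m,q)=1$. Since $k$ is finite, $k(v)$ and $k(q)$ are its field extensions of degrees $m$ and $q$, and the tensor product over $k$ of two finite extensions of a finite field is a product of $\gcd(m,q)$ copies of the field extension of degree $\operatorname{lcm}(m,q)$; as $\gcd(m,q)=1$ this is a single field, of degree $mq$ over $k$, hence of degree $q$ over $k(v)$, so it equals $k(v)(q)$ by uniqueness of finite extensions. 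Then (2) is immediate: $Y_u$ is finite \'etale over $k$, so $k[Y_u]=\prod_{v\in Y_u}k(v)$, and $-\otimes_{k}k(q)$ commutes with finite products, whence $k[Y_u]\otimes_{k}k(q)=\prod_{v\in Y_u}(k(v)\otimes_{k}k(q))=\prod_{v\in Y_u}k(v)(q)$ by~(1).

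For (3), set $B=\prod_{v\in Y_u}k(v)(q)$; a $k$-algebra homomorphism $\alpha\colon k[t]\to B$ is the same as a choice of $\alpha(t)\in B$, and it is surjective exactly when $\alpha(t)$ generates $B$ as a $k$-algebra. For each $m$ with $1\le m\le d(Y_u)$ and each $v\in Y_u(m)$, the field $k(v)(q)$ is, as an abstract $k$-algebra, the unique degree-$mq$ extension of $k$, and the minimal polynomials over $k$ of its primitive elements run over all $Irr(mq)$ monic irreducibles of degree $mq$. Since $Irr(N)$ grows without bound (for instance $Irr(N)\ge(c^{N}-2c^{N/2})/N$; cf.\ Lemma~\ref{l:F1F2_preliminary}), for $q\gg 0$ one has $Irr(mq)\ge\sharp Y_u$ for every $m$ in the finite range $1\le m\le d(Y_u)$. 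I would then pick, for the points $v\in Y_u$, pairwise distinct monic irreducible polynomials $f_v\in k[t]$ with $\deg f_v=q\deg_{k}k(v)$ — possible within each degree class by the cardinality bound, automatic across degree classes — together with a root $a_v\in k(v)(q)$ of $f_v$, so that $k[a_v]=k(v)(q)$. As the $f_v$ are pairwise coprime, the Chinese Remainder Theorem gives an isomorphism $k[t]/(\prod_v f_v)\cong\prod_v k[t]/(f_v)\cong\prod_v k(v)(q)=B$, and composing it with the quotient map $k[t]\twoheadrightarrow k[t]/(\prod_v f_v)$ produces the desired surjection $\alpha$, with $\alpha(t)=(a_v)_v$.

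The main obstacle is this last point: distinct points of $Y_u$ of equal degree over $k$ give factors of $B$ that are isomorphic as $k$-algebras, so one cannot simply take one irreducible polynomial per factor — one must first know that enough pairwise distinct irreducibles of each relevant degree $mq$ are available, and this is exactly what the hypothesis $q\gg 0$ supplies via the growth of $Irr$. Parts (1) and (2) involve nothing beyond the structure of tensor products of finite fields and compatibility of base change with finite products.
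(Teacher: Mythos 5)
Your proof is correct and takes essentially the same route as the paper's: (1) and (2) come from the coprimality of $q$ with the residue degrees (you inline the argument that the paper delegates to Lemma~\ref{l:F1F2_preliminary}(2)), and (3) rests on the same two ingredients, namely the lower bound $Irr(dq)\geq\sharp Y_u(d)$ for $q\gg 0$ and the Chinese Remainder Theorem. The only cosmetic difference is that you choose the irreducible polynomials $f_v$ directly and apply CRT once, whereas the paper first surjects $k[t]$ onto the product over \emph{all} degree-$dq$ points of $\mathbb A^1_k$ and then projects onto a sub-product; these are the same argument.
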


\begin{proof}
The first assertion follows from the lemma \ref{l:F1F2_preliminary}(2).
The second assertion follows from the first one.
Prove now the third assertion.
The $k$-algebra homorphism
$$k[t]\to \prod^{d(Y_u)}_{d=1}(\prod_{x\in \mathbb A^1_{k}(dq))}k[t]/\mathfrak(m_x))=
\prod^{d(Y_u)}_{d=1}(\prod_{x\in \mathbb A^1_{k}(qd)} k(dq))$$
is surjective. The $k$-algebra $k[Y_u]$ is equal to
$\prod^{d(Y_u)}_{d=1}\prod_{v\in Y_u(d)}k(v)=\prod^{d(Y_u)}_{d=1}\prod_{v\in Y_u(d)}k(d)$.
Thus for any prime $q\gg 0$ one has
$$k[Y_u]\otimes_{k} k(q)=\prod^{d(Y_u)}_{d=1}\prod_{v\in Y_u(d)}k(dq).$$
Choose a prime $q\gg 0$ such that for any $d=1,2,...,d(Y_u)$ one has
$Irr(dq)\geq \sharp(Y_u(d))$.
This is possible by the lemma \ref{l:F1F2_preliminary}.
In this case for any $d=1,2,...,d(Y_u)$ there exists
a surjective $k$-algebra homomorphism
$\prod_{x\in \mathbb A^1_{k}(qd)} k(dq)\to \prod^{d(Y_u)}_{d=1}\prod_{v\in Y_u(d)}k(dq)$.
Thus for this specific choice of prime $q$ there exists a surjective $k$-algebra homomorphism
$k[t]\to k[Y_u]\otimes_{k} k(q)$.
The third assertion of the lemma is proved.
\end{proof}

\begin{lem}
\label{F1F2_copy_very_final}
Let $k$ be the finite field from the lemma \ref{F1F2}.
Let $U$ be the the semi-local scheme as in the lemma \ref{F1F2}.
Let $u \in U$ be a closed point and let $k(u)$ be its residue field (it is a finite extension of the finite field $k$).
Let $Z_u\subset\mathbb A^1_u$ be a closed subscheme finite over $u$.
Let $Y^{\prime}_u \to u$ be a finite \'{e}tale morphism such that
$Y^{\prime}_u$
contains a $k(u)$-rational point. Then for any
two different primes $q_1,q_2\gg 0$
the finite field extensions
$k(q_1)$ and $k(q_2)$ of the finite field $k$ satisfy the following conditions \\
(i) for $j=1,2$ one has $q_j > d(Y'_u)$;\\
(ii) for any $j=1,2$ and any point $v\in Y'_u$ the $k$-algebra $k(v) \otimes_k k(q_j)$ is a field;\\
(iii) both primes $q_1,q_2$ are strictly greater than
$\text{max}\{[k(z): k]| z\in Z_u \}$;\\
(iv) there is a closed embedding of $u$-schemes
$Y^{\prime\prime}_u=((Y^{\prime}_u\otimes_k k_1) \coprod (Y^{\prime}_u\otimes_k k_2)) \xrightarrow{i_u} \mathbb A^1_u$,\\
(v) for $Y_u=i_u(Y^{\prime\prime}_u)$ one has $Y_u \cap Z_u = \emptyset$,\\
(vi) $Pic(\P^1_{u}-Y_u)=0$.\\
\end{lem}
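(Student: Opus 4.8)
The plan is to set $k_j := k(q_j)$ and $F := k(u)$, and to check the six conditions: (i)--(iii) are immediate constraints we impose on $q_1,q_2$, while (iv)--(vi) carry the content.

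First I would note that (i) holds once $q_j > d(Y'_u)$ and (iii) holds once $q_j > \max\{[k(z):k] \mid z \in Z_u\}$; I also harmlessly require $q_j > p$. Granting these, every $v \in Y'_u$ satisfies $[k(v):k] \le d(Y'_u) < q_j$ with $q_j$ prime, so $q_j$ is coprime to $p$ and to $[k(v):k]$, and Lemma~\ref{l:F1F2_preliminary}(2) yields that $k(v) \otimes_k k(q_j)$ is the degree-$q_j$ field extension $k(v)(q_j)$ of $k(v)$; this is (ii).

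Next I would unwind $Y''_u$ over $F$. Base-changing in stages, $Y'_u \otimes_k k_j = Y'_u \otimes_F (F \otimes_k k_j)$, and applying (ii) to the $k(u)$-rational point of $Y'_u$ gives $F \otimes_k k_j = F(q_j)$, the degree-$q_j$ extension of $F$; hence $F[Y''_u] = \prod_{j=1,2}\prod_{v \in Y'_u} k(v)(q_j)$ is a finite product of finite extensions of $F$, the factor indexed by $(j,v)$ having degree $q_j[k(v):k(u)]$ over $F$, a multiple of $q_j$ lying in $[q_j,\, q_j\, d(Y'_u)]$. To get (iv) I would apply Lemma~\ref{l:F1F2_surjectivity} over the base field $F$ to the \'etale $F$-scheme $Y'_u$, separately for $q=q_1$ and $q=q_2$: for $q_1,q_2$ large enough this produces closed subschemes $W_1,W_2 \subset \mathbb A^1_u$ with $W_j \cong Y'_u \otimes_k k_j$. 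Each closed point of $W_j$ has degree over $F$ a positive multiple of $q_j$ that is at most $q_j\, d(Y'_u) < q_1 q_2 = \mathrm{lcm}(q_1,q_2)$; therefore the degrees occurring in $W_1$ are disjoint from those occurring in $W_2$, the supports of $W_1$ and $W_2$ are disjoint, and $W_1 \sqcup W_2 = Y''_u$ embeds as a closed subscheme $i_u \colon Y''_u \hookrightarrow \mathbb A^1_u$.

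It remains to verify (v) and (vi). Each closed point of $Y_u \cong Y''_u$ has degree $\ge q_1$ over $F$, while each closed point of $Z_u$ has degree $[k(z):k(u)] \le [k(z):k] < q_1$ over $F$; closed points of $\mathbb A^1_F$ of distinct degrees are cut out by coprime polynomials, so $Y_u \cap Z_u = \emptyset$, which is (v). For (vi), the $k(u)$-rational point $v_0$ of $Y'_u$ contributes the factors $k(v_0)(q_1) = F(q_1)$ and $F(q_2)$, so $Y_u$ has closed points of degrees $q_1$ and $q_2$ over $F$; the localization sequence $\bigoplus_{x \in Y_u}\mathbb Z \to \Pic(\P^1_F) = \mathbb Z \to \Pic(\P^1_F - Y_u) \to 0$, in which $x$ maps to $\deg_F(x)$, then forces $\Pic(\P^1_u - Y_u) \cong \mathbb Z / g\mathbb Z$ with $g \mid \gcd(q_1,q_2) = 1$, i.e.\ $\Pic(\P^1_u - Y_u) = 0$. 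Hence any pair of distinct primes $q_1 \ne q_2$ larger than $\max\{d(Y'_u),\, p,\, \max_{z \in Z_u}[k(z):k]\}$ and larger than the threshold provided by Lemma~\ref{l:F1F2_surjectivity} above satisfies (i)--(vi). The main technical step is the construction of $i_u$ in (iv); the conceptual point is that the vanishing of $\Pic(\P^1_u - Y_u)$ requires two components of $Y_u$ of coprime degree over $k(u)$, which is exactly what the two coprime primes and the rational point of $Y'_u$ deliver.
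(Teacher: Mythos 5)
Your proof is correct and follows essentially the same route as the paper: Lemma \ref{l:F1F2_surjectivity} supplies the two embeddings of $Y'_u\otimes_k k_j$ into $\mathbb A^1_u$, disjointness of (iv) and (v) comes from the degree bookkeeping $d\cdot q_1\neq d'\cdot q_2$ together with condition (iii), and (vi) follows from the two points of coprime degrees $q_1,q_2$ contributed by the $k(u)$-rational point of $Y'_u$. The only (harmless) variations are that you apply the surjectivity lemma directly over $k(u)$ rather than over $k$ and then extending scalars, and that you spell out the $\Pic$ localization sequence which the paper leaves implicit.
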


\begin{proof}

Clearly the conditions (i) and (iii) holds for all different big enough primes $q_1,q_2$.
Applying now the lemma \ref{l:F1F2_surjectivity}(1) to the \'{e}tale $k$-scheme $Y'_u$
we see that
the condition (ii)
holds also for all different big enough primes $q_1,q_2$.
Take now any two different primes $q_1,q_2$
satisfying the conditions (i) to (iii).
{\it We claim that those two primes are such that the conditions
(iv) to (vi) are satisfied.
}

Check the condition (iv).
Lemma \ref{l:F1F2_surjectivity}(3)
shows that there are surjections of the $k$-algebras
$\alpha_j: k[t] \to k[Y'_u]\otimes_k k_i$ for $j=1,2$.
Using the structure morphism
$Y_u\to u$ extend $\alpha_j$'s
to surjections of the $k(u)$-algebras
$\beta_j: k(u)[t] \to k[Y'_u]\otimes_k k_j$ for $j=1,2$.
The surjections $\beta_j$'s induce two closed embeddings
$i_j: Y^{\prime}_u\otimes_k k_j \hookrightarrow \mathbb A^1_{k(u)}$
of $k(u)$-schemes.
By the lemma \ref{l:F1F2_surjectivity}(1)
for any point $w\in Y^{\prime}_u\otimes_k k_j$
one has
$deg_k(w)=d\cdot q_j$
for appropriate $d=1,2,...,d(Y'_u)$.
Since the primes $q_1$, $q_2$ do not coincide and
$q_j > d(Y'_u)$ for $j=1,2$, hence for any
$d_1,d_2 \in \{1,2,...,d(Y'_u)\}$
one has
$d_1q_1\neq d_2q_2$.
Thus the intersection of
$i_1(Y^{\prime}_u\otimes_k k_1)$ and $i_2(Y^{\prime}_u\otimes_k k_2)$ is empty.
Hence the $k(u)$-morphism
$$i_u=i_1\sqcup i_2: Y^{\prime}_u\otimes_k k_1 \sqcup Y^{\prime}_u\otimes_k k_2 \to \mathbb A^1_u$$
is a closed embedding. The condition (iv) is verified.
Since for any point $w\in Y^{\prime}_u\otimes_k k_j$
one has
$deg_k(w)=d\cdot q_j\ge q_j$,
hence the above choice of $q_1$ and $q_2$
yield
the condition (v).

Set now
$Y_u=i_u(Y^{\prime\prime}_u)$, where $Y^{\prime\prime}_u=Y^{\prime}_u\otimes_k k_1 \sqcup Y^{\prime}_u\otimes_k k_2$.
Since $Y^{\prime}_u$
contains a $k(u)$-rational point,
hence
$Y_u$ has a point of the degree $q_1$ over $k(u)$ and a point of the degree $q_2$ over $k(u0$.
Thus $Pic(\mathbb P^1_u-Y_u)=0$. This proves the condition (vi).
\end{proof}


\begin{proof}[Proof of the lemma \ref{F1F2}]
We prove that lemma for the case of local $U$ and left the general case to the reader.
So we may suppose that there is only one closed point $u$ in $U$.
Let $k(u)$ be its residue field.
Let $Y^{\prime} \to U$ be the finite \'{e}tale morphism from the lemma \ref{F1F2}.
Let $Y^{\prime}_{u}=Y'\times_U u$ be the fibre of $Y^{\prime}$ over $u$.
By the hypotheses of the lemma \ref{F1F2}
the $k(u)$-scheme $Y^{\prime}_{u}$ contains a $k(u)$-rational point.
Let $k_1/k$ and $k_2/k$ be the field extensions from
the lemma \ref{F1F2_copy_very_final}.
Set
$Y''=(Y^{\prime}\otimes_k k_1) \sqcup (Y^{\prime}\otimes_k k_2)$
and
$Y''_u=Y'\times_U u$.
Then one has
$Y''_u=(Y^{\prime}_u\otimes_k k_1) \sqcup (Y^{\prime}_u\otimes_k k_2)$.

Let
$i_u: Y''_u \hookrightarrow \mathbb A^1_u$
be the closed embedding from the lemma
\ref{F1F2_copy_very_final}. Then the pull-back map of the $k$-algebras
$i^*_u: k(u)[t]\to k[Y''_u]$ is surjective.
Set $\alpha=i^*_u(t)\in k[Y''_u]$.
Let $A\in k[Y'']$ be a lift of $\alpha$.

Consider a unique $k[U]$-algebra homomorphism
$k[U][t]\xrightarrow{i^*} k[Y'']$
which takes $t$ to $A$.
Since $k[Y'']$ is a finitely generated $k[U]$-module,
the Nakayama lemma shows that the map $i^*$ is surjective.
Hence the induced scheme morphism
$i: Y''\to \mathbb A^1_U$
is a closed embedding.
The assertion (iv) proved.
Set $Y=i(Y'')$.

Clearly, the two closed embeddings
$Y''_u\to Y''\to \mathbb A^1_U$
and
$Y''_u\to \mathbb A^1_u \to \mathbb A^1_U$
coincide.
Lemma \ref{F1F2_copy_very_final} completes now the proof of the lemma
\ref{F1F2}.
\end{proof}

\section{Proof of Theorem \ref{th:psv}.}
\label{SchemeY_section}
Theorem \ref{th:psv} in the case of an infinite field $k$ is proved in \cite[Thm. 2]{FP}.
{\it So, we prove the remaining case, when the field $k$ is finite.
}

Let $k$, $U$ and $\bG$ be as in Theorem~\ref{th:psv} and
{\it let the field $k$ be finite.
}
Let $u_1,\ldots,u_n$ be all the closed points of $U$. Let $k(u_i)$ be the residue field of $u_i$. Consider the reduced closed subscheme $\bu$ of $U$,
whose points are $u_1$, \ldots, $u_n$. Thus
\[
 \bu\cong\coprod_i\spec k(u_i).
\]
Set $\bG_\bu=\bG\times_U\bu$. By $\bG_{u_i}$ we denote the fiber of $\bG$ over $u_i$;
it is a simple simply-connected algebraic group over $k(u_i)$.
The following lemma is a simplified version of Lemma
\cite[Lemma 3.3]{Pan1}.

\begin{lem}
\label{OjPan_1}
Let $S=\text{Spec}(A)$ be a regular
semi-local scheme such that
{\bf the residue field at any of its closed point is finite}.
Let $T$ be a closed
subscheme of $S$. Let $W$ be a closed subscheme of
the projective space $\Bbb P^d_S$.
Assume that over $T$ there exists a section
$\delta:T\to W$
of the projection $W\to S$. Suppose further that
$W$ is $S$-smooth and equidimensional over $S$ of relative dimension $r$.
Then there exists a closed subscheme $\tilde S$ of $W$ which is finite
\'etale over
$S$ and contains $\delta(T)$.
\end{lem}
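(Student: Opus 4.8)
The plan is to realize $\tilde S$ as the zero locus of a well-chosen global section of an ample vector bundle on $W$, cutting $W$ down to something finite étale over $S$ that still contains $\delta(T)$. First I would set up the numerology: since $W$ is $S$-smooth and equidimensional of relative dimension $r$, a closed subscheme of $W$ that is finite over $S$ should be cut out — at least locally — by $r$ equations; so the natural candidate is the vanishing locus of a section $\sigma$ of $\cO_W(N)^{\oplus r}$ for $N\gg 0$. The section must satisfy two requirements simultaneously: (a) its zero scheme $\tilde S = Z(\sigma)$ is finite and étale over $S$, which by the Jacobian criterion fiberwise amounts to $Z(\sigma_s)$ being a finite reduced subscheme of the smooth projective variety $W_s$ for every point $s\in S$, i.e. $\sigma_s$ avoids the ``bad'' locus where the derivative degenerates; and (b) $\tilde S \supseteq \delta(T)$, which is the interpolation condition forcing $\sigma$ to vanish along the prescribed closed subscheme $\delta(T)\subset W$. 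Because the closed points of $S$ have finite residue fields, genericity arguments over an infinite field are unavailable, and this is exactly where Poonen's Bertini theorems over finite fields \cite{Poo1,Poo2} enter; indeed the statement is tailored (``the residue field at any of its closed points is finite'') to invoke that machinery, and this is the step I expect to be the main obstacle.

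Concretely I would proceed as follows. Since $S$ is semi-local, pick $N$ large enough that $\cO_{\P^d_S}(N)$ is very ample and that the ideal sheaf $\mathcal I_{\delta(T)}\subset \cO_W$ is generated by global sections of $\cO_W(N)$ and its higher cohomology vanishes; then the space of sections of $\cO_W(N)$ vanishing on $\delta(T)$ is large and surjects onto the sections of the structure sheaf of any fixed finite subscheme disjoint from $\delta(T)$. The goal is to choose $r$ such sections $\sigma_1,\dots,\sigma_r$, all vanishing on $\delta(T)$, so that at every closed point $s$ of $S$ the induced sections on the fiber $W_s$ form a ``regular sequence cutting out a smooth $0$-dimensional subscheme,'' and moreover near $\delta(T_s)$ the first-order behaviour is controlled so that $\tilde S\to S$ is unramified there. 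This is an application of Poonen's Bertini-with-Taylor-conditions theorem: one imposes the finitely many local Taylor conditions at the points of $\delta(T)$ (to force passage through $\delta(T)$ with the right transversality) and asks Bertini-smoothness elsewhere on each finite fiber; the output is a section with the desired fiberwise properties simultaneously over all closed points of the semi-local $S$.

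Having produced $\sigma=(\sigma_1,\dots,\sigma_r)$ with $Z(\sigma_s)$ finite, reduced, and smooth over $k(s)$ for every closed point $s$, and containing $\delta(T_s)$, I would then check that $\tilde S := Z(\sigma)$ is finite étale over $S$ in a neighbourhood. Properness is automatic ($\tilde S$ is closed in $\P^d_S$); the fiber dimension is $0$ at the closed points by construction, so by upper-semicontinuity $\tilde S\to S$ is quasi-finite, hence finite after possibly shrinking (but $S$ is semi-local so no shrinking is needed if we arranged the conditions at all closed points). Flatness follows from the local criterion once each $\sigma_i$ restricts to a regular sequence on the fibers, which is how they were chosen; and unramifiedness/smoothness of $\tilde S\to S$ follows fiberwise from the smoothness of $Z(\sigma_s)$ over $k(s)$ together with flatness. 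Finally $\delta(T)\subset \tilde S$ because each $\sigma_i$ was selected to vanish on $\delta(T)$. The one delicate point to write carefully is that $\delta(T)$ itself need not be reduced or smooth, so the Poonen conditions must be phrased as: $\sigma$ lies in the ideal of $\delta(T)$ and, transversally to $\delta(T)$, its derivatives are generic — this is precisely the ``prescribe a finite jet, then Bertini'' format, and assembling it uniformly over the finitely many closed points of $S$ is the technical heart of the argument.
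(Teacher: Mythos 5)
Your proposal follows essentially the same route as the paper: place the interpolation condition at $\delta(T)$ as a prescribed jet (vanishing plus a controlled linear term transverse to the fibres of $W$), invoke Poonen's Bertini theorem over finite fields to find $r$ high-degree hypersurface sections meeting each closed fibre $W(s)$ transversally, glue the conditions over the finitely many closed points of the semi-local $S$ (the paper does this via the Chinese remainder theorem after a linear change of coordinates sending $\delta(T)$ to the origin), and then verify finiteness, flatness via the regular-sequence criterion, and étaleness fibrewise. The only cosmetic difference is that the paper lets the $r$ degrees increase as the hypersurfaces are chosen iteratively, rather than fixing a single $N$.
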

\begin{proof}
If $r=0$, then there is nothing to prove.
So, we will assume below in the proof that $r\geq 1$.
Since $S$ is semilocal, after a linear change
of coordinates we may assume that $\delta$ maps $T$ into
the closed subscheme of $\Pro^d_T$ defined by
$X_1=\dots=X_d=0$.

{\bf Let $s\in S$ be a closed point of $S$. Let $s\in T$ be a closed point of $T$.
Let $\Pro^d_s$ be closed fibre  of $\Pro^d_S$ over the point $s$.
Let $\mathbb A^d_s\subset \Pro^d_s$ be the affine subspace defined by the unequality
$X_0\neq 0$.
Let $t_i:=X_i/X_0$ ($i \in \{1,2, \dots, d\}$) be the coordinate function on $\mathbb A^d_s$.
The origin $x_{0,s}=(0,0,\dots, 0)\in \mathbb A^d_s$ has the homogeneous coordinates
$[1:0:\dots :0]$ in $\Pro^d_s$.
Let $W(s)\subset \mathbb P^d_s$ be the fibre of $W$ over the point $s$.
If $x_{0,s}$ is in $W(s)$, then
let $\tau_s\subset \mathbb A^d_s$ be the tangent space to $W(s)$ at the point
$x_0\in \mathbb A^d_s$.
In this case let $l_s=l_s(t_1,t_2,\dots, t_d)$ be a linear form in $k(s)[t_1,t_2,\cdots,t_d]$
such that
$l_s|_{\tau_s}\neq 0$.
If $x_{0,s}$ is not in $W(s)$, then set $l_s=t_1$.
In all the cases
$$L_s:=X_0\cdot l_s \in k(s)[X_1,X_2,\cdots,X_d]$$
is a homogeneous polinomial of degree $1$.

Let $s \in S$ be a closed point. Suppose $x_{0,s} \in W(s)$.
Then
by \cite[Thm. 1.2]{Poo1} there is an integer $N_1(s)\geq 1$ such that for any
positive integer $N\geq N_1(s)$ there is a homogeneous polinomial
$$H_{1,N}(s)=X^N_0\cdot F_{0,N}(s)+X^{N-1}_0\cdot F_{1,N}(s)+\dots+X_0\cdot F_{N-1,N}(s)+F_{N,N}(s)\in k(s)[X_0,X_1,\cdots,X_d]$$
of degree $N$ with homogeneous polinomials $F_{i,N}(s)\in k(s)[X_1,\dots,X_n]$ of degree $i$
such that the following holds \\
(i) $F_{0,N}(s)=0$, $F_{1,N}(s)=L_s$; \\
(ii) the subscheme $V(s)\subseteq \Pro^d_s$ defined by the equation $H_{1,N}(s)=0$
intersects $W(s)$ transversally.

Let $s \in S$ be a closed point. Suppose $x_{0,s}$ is not in $W(s)$.
Then
by \cite[Thm. 1.2]{Poo1} there is an integer $N_1(s)\geq 1$ such that for any
positive integer $N\geq N_1(s)$ there is a homogeneous polinomial
$$H_{1,N}(s)=X^N_0\cdot F_{0,N}(s)+X^{N-1}_0\cdot F_{1,N}(s)+\dots+X_0\cdot F_{N-1,N}(s)+F_{N,N}(s)\in k(s)[X_0,X_1,\cdots,X_d]$$
of degree $N$ with homogeneous polinomials $F_{i,N}(s)\in k(s)[X_1,\dots,X_n]$ of degree $i$
such that the following holds \\
(i) $F_{0,N}(s)=0$, $F_{1,N}(s)=L_s=X_1$; \\
(ii) the subscheme $V(s)\subseteq \Pro^d_s$ defined by the equation $H_{1,N}(s)=0$
intersects $W(s)$ transversally.

Let $N_1=\text{max}_s \{N_1(s)\}$, where $s$ runs over all the closed points of $S$.
For any closed point $s \in S$ set $H_1(s):= H_{1,N_1}(s)$.
Then for any closed point $s \in S$ the polinomial $H_1(s)\in k(s)[X_0,X_1,\cdots,X_d]$ is homogeneous of the degree $N_1$.
By the chinese
remainders' theorem for any $i=0,1,\dots,N$
there exists a common  lift
$F_{i,N_1}\in A[X_1,\dots,X_d]$ of all polynomials $F_{i,N_1}(s)$,
$s$ is a closed point of $S$,
such that
$F_{0,N_1}=0$ and for any $i=0,1,\dots,N$ the polinomial $F_{i,N_1}$ is homogeneous of
degree $i$.
By the chinese
remainders' theorem
there exists a common lift
$L \in A[X_1,\cdots,X_d]$
of all polynomials $L_s$,
$s$ is a closed point of $S$,
such that
$L$ is homogeneous of the degree one.
Set
$$H_{1,N_1}:=X^{N_1-1}_0\cdot L+X^{N_1-2}_0\cdot F_{2,N_1} \dots+X_0\cdot F_{N_1-1,N_1}+F_{N_1,N_1}\in A[X_0,X_1,\cdots,X_d].$$
Note that for any closed point $s$ in $S$ the evaluation of $H_{1,N_1}$ at the point $s$ coincides with the polinomial
$H_{1,N_1}(s)$ in $k(s)[X_0,X_1,\cdots,X_d]$.
Note also that
$H_{1,N_1}[1:0:\dots:0]=0$.
Hence $H_{1,N_1}|_{\delta(T)}\equiv 0$.

Repeating this consruction several times
we can choose a family of $r$ {\bf homogeneous} polynomials $H_{1,N_1},\dots,H_{r,N_r}$
(in general of increasing degrees)
such that for any closed point $s$ in $S$
a subscheme in $\Pro^d_s$ defined by the equations
$H_{1,N_1}(s)=0\,,\,\dots\,,\, H_{r,N_r}(s)=0$
intersects $W(s)$ transversally.
Let $Y$ be a subscheme in $\Pro^d_S$ defined by the equations
$$H_{1,N_1}=0\,,\,\dots\,,\, H_{r,N_r}=0.$$

For a closed point $s$ in $S$ let $Y(s)$ be the fibre of $Y$ over $s$.
It coincides with the subscheme in $\Pro^d_s$ defined by the equations
$H_{1,N_1}=0\,,\,\dots\,,\, H_{r,N_r}=0$. Thus, for any closed point
$s$ in $S$ the scheme $Y(s)$ intersect $W$ transversally.
}

{\it We claim that the subscheme  $\tilde S=W\cap Y$  has the
required properties}. Note first that $W\cap Y$ is finite
over $S$.
In fact, $W\cap Y$
is projective over $S$ and every closed
fibre (hence every fibre) is finite. Since the closed
fibres of $W\cap Y$ are finite \'etale over the closed points
of $S$, to show that $W\cap Y$ is finite \'etale over $S$ it
only remains to show that it is flat over $S$. Noting that
$W\cap Y$ is defined in every closed fibre by a regular
sequence of equations and localizing at each closed point of
$S$, we see that flatness follows from
\cite[Lemma 7.3]{OP2}.

\end{proof}

\begin{prop}
\label{SchemeY}
Let $Z\subset\mathbb A^1_U$ be a closed subscheme finite over $U$.
There is a closed subscheme $Y\subset\mathbb A^1_U$ which is \'etale and finite over $U$ and such that \\
(i) $\bG_Y:=\bG\times_UY$ is quasi-split, \\
(ii) $Y\cap Z=\emptyset$,\\
(iii) for any closed point $u \in U$ one has $Pic(\P^1_u - Y_u)=0$, where $Y_u:=\P^1_u\cap Y$.\\
(Note that $Y$ and $Z$ are closed in $\P^1_U$ since they are finite over $U$).
\end{prop}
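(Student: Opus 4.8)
The plan is to carry out the construction in two steps: first produce an auxiliary finite \'etale $U$-scheme $Y'$ over which $\bG$ becomes quasi-split and whose closed fibres carry rational points, and then feed $Y'$ together with the given $Z$ into Lemma~\ref{F1F2} to obtain a subscheme $Y\subset\mathbb A^1_U$ with the required disjointness from $Z$ and vanishing of $\mathrm{Pic}(\mathbb P^1_u-Y_u)$.

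For the first step I would use the scheme $\mathrm{Bor}(\bG)$ of Borel subgroups of $\bG$. Since $\bG$ is reductive over the connected regular semi-local base $U$, the $U$-scheme $\mathrm{Bor}(\bG)$ is smooth over $U$, equidimensional over $U$ of relative dimension $r$ (the number of positive roots of $\bG$), and projective over $U$; as $U$ is affine, this gives a closed $U$-immersion $\mathrm{Bor}(\bG)\hookrightarrow\mathbb P^d_U$ for some $d$. Let $\bu=\coprod_i\Spec k(u_i)\subset U$ be the reduced closed subscheme supported on the closed points $u_1,\dots,u_n$. Each residue field $k(u_i)$ is finite, so by Steinberg's theorem on groups over finite fields $\bG_{u_i}$ is quasi-split over $k(u_i)$; hence $\bG_{u_i}$ has a Borel subgroup defined over $k(u_i)$, that is, $\mathrm{Bor}(\bG_{u_i})(k(u_i))\neq\emptyset$. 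Choosing one such point over every $u_i$ gives a section $\delta\colon\bu\to\mathrm{Bor}(\bG)$ of $\mathrm{Bor}(\bG)\to U$ over $\bu$. I would then apply Lemma~\ref{OjPan_1} with $S=U$, $T=\bu$, $W=\mathrm{Bor}(\bG)\subset\mathbb P^d_U$ and this $\delta$ — the hypotheses hold, since $U$ is regular semi-local with finite residue fields at its closed points and $W$ is smooth and equidimensional over $U$ — obtaining a closed subscheme $Y'\subset\mathrm{Bor}(\bG)$, finite \'etale over $U$, containing $\delta(\bu)$. By the universal property of the scheme of Borel subgroups, the inclusion $Y'\hookrightarrow\mathrm{Bor}(\bG)$ is exactly a Borel subgroup scheme of $\bG\times_UY'$ over $Y'$, so $\bG_{Y'}:=\bG\times_UY'$ is quasi-split; and since $Y'\supseteq\delta(\bu)$, the fibre $Y'_{u_i}$ contains a $k(u_i)$-rational point for each $i$.

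For the second step I would invoke Lemma~\ref{F1F2} with this $Y'$ and the given $Z\subset\mathbb A^1_U$: its hypotheses are precisely that $Y'\to U$ is finite \'etale with a $k(u_i)$-rational point in every closed fibre. It produces finite field extensions $k_1,k_2$ of $k$ of coprime degrees and a closed $U$-embedding $i\colon Y''=(Y'\otimes_kk_1)\sqcup(Y'\otimes_kk_2)\hookrightarrow\mathbb A^1_U$ such that, setting $Y:=i(Y'')$, one has $Y\cap Z=\emptyset$ and $\mathrm{Pic}(\mathbb P^1_u-Y_u)=0$ for every closed point $u\in U$; these are conditions (ii) and (iii). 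It remains to check (i) and that $Y\to U$ is finite \'etale. For the latter, $Y\cong Y''$ as $U$-schemes, and $Y''\to U$ is finite \'etale since it is obtained from the finite \'etale morphism $Y'\to U$ by the finite \'etale base changes along $\Spec k_1\to\Spec k$ and $\Spec k_2\to\Spec k$ followed by disjoint union. For (i), base change gives $\bG\times_UY\cong(\bG_{Y'}\otimes_kk_1)\sqcup(\bG_{Y'}\otimes_kk_2)$, and a Borel subgroup scheme of $\bG_{Y'}$ over $Y'$ pulls back to a Borel subgroup scheme over each of $Y'\otimes_kk_1$ and $Y'\otimes_kk_2$ (being a Borel subgroup is stable under base change), so $\bG_Y$ is quasi-split.

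The bulk of this argument is bookkeeping; the inputs doing the real work are Steinberg's quasi-splitness over finite fields (giving the section $\delta$), Lemma~\ref{OjPan_1} (turning $\delta$ into a finite \'etale $Y'$ by means of Poonen's Bertini theorems), and Lemma~\ref{F1F2} (placing the cover inside $\mathbb A^1_U$ away from $Z$ while controlling $\mathrm{Pic}(\mathbb P^1_u-Y_u)$ over the finite residue fields). Thus the genuinely hard point — embedding a finite scheme into the affine line over a finite base compatibly with $Z$ and the Picard condition — has already been isolated in Lemma~\ref{F1F2}; here the step most needing care is the verification that $\mathrm{Bor}(\bG)$, together with its projective embedding, meets the hypotheses of Lemma~\ref{OjPan_1}.
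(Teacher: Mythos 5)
Your proposal is correct and follows essentially the same route as the paper: choose Borel subgroups over the finite residue fields (quasi-splitness over finite fields), apply Lemma~\ref{OjPan_1} to the scheme of Borel subgroups to obtain a finite \'etale $Y'$ through those points, and then apply Lemma~\ref{F1F2} to embed (a modification of) $Y'$ into $\mathbb A^1_U$ away from $Z$ with the Picard condition. Your extra verifications that $Y\to U$ is finite \'etale and that quasi-splitness survives the base changes in Lemma~\ref{F1F2} are details the paper leaves implicit.
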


\begin{proof}
For every $u_i$ in $\bu$ choose a Borel subgroup $\bB_{u_i}$ in $\bG_{u_i}$.
{\it The latter is possible since the fields $k(u_i)$ are finite.}
Let $\cB$ be the $U$-scheme of Borel subgroup schemes of $\bG$.
It is a smooth projective $U$-scheme (see~\cite[Cor.~3.5, Exp.~XXVI]{SGA3}).
The subgroup $\bB_{u_i}$ in $\bG_{u_i}$ is a $k(u_i)$-rational point~$b_i$ in the fibre of $\cB$ over the point $u_i$.
Now apply Lemma
\ref{OjPan_1} to
the scheme
$U$ for $S$,
the scheme $\mathbf u$ for $T$,
the scheme $\cB$ for $W$
and to a section $\delta: \mathbf u \to \cB$,
which takes the point $u_i$ to the point $b_i \in \cB$.
Since the scheme $\cB$ is $U$-smooth and
is equidimensional over $S$
we are under the assumption of Lemma \ref{OjPan_1}. Hence
there is a closed subscheme $Y^{\prime}$ of $\cB$ such that
$Y^{\prime}$ is \'etale over $U$ and all the $b_i$'s are in $Y$
The $U$-scheme $Y^{\prime}$ satisfies the hypotheses of Lemma
\ref{F1F2}. Take the closed subscheme $Y$ of $\mathbb A^1_U$ as in the item (v)
of the Lemma. For that specific $Y$ the conditions (ii) and (iii) of the Proposition are
obviously satisfied. The condition (i) is satisfied too, since already
it is satisfied for the $U$-scheme $Y^{\prime}$. The Proposition follows.

\end{proof}

\begin{proof}[Proof of Theorem~\ref{th:psv}]
Set $Z:=\{h=0\}\cup s(U)\subset\mathbb A^1_U$.
Clearly, $Z$ is finite over $U$.
Since the principal $\bG$-bundle $\cE_t$ is trivial over $(\mathbb A^1_U)_h$
it is trivial over $\mathbb A^1_U-Z$.
Note that $\{h=0\}$ is closed in $\P^1_U$ and finite over $U$ because $h$ is monic.
Further, $s(U)$ is also closed in $\P^1_U$ and finite over $U$ because it is a zero set of a degree one monic polynomial.
Thus $Z\subset\P^1_U$ is closed and finite over $U$.


Since the principal $\bG$-bundle $\cE_t$ is trivial over $(\mathbb A^1_U)_h$, and
$\bG$-bundles can be glued in Zariski topology,
there exists a principal $\bG$-bundle $\cG$ over $\P^1_U$ such that\\
\indent (i) its restriction to $\mathbb A^1_U$ coincides with $\cE_t$;\\
\indent (ii) its restriction to $\P^1_U-Z$ is trivial.

Now choose $Y$ in $\mathbb A^1_U$ as in Proposition
~\ref{SchemeY}.
Clearly, $Y$ is finite \'{e}tale over $U$ and closed in $\P^1_U$.
Moreover,
$Y \cap \{\infty\}\times U=\emptyset= Z \cap \{\infty\}\times U$
and
$Y\cap Z= \emptyset$.
Applying
Theorem~\ref{MainThm2}
with this choice of $Y$ and $Z$,
we see that the restriction of $\cG$ to $\P^1_U-Y$ is a trivial $\bG$-bundle.
Since $s(U)$ is in $\mathbb A^1_U-Y$
and $\cG|_{\mathbb A^1_U}$ coincides with $\cE_t$,
we conclude that $s^*\cE_t$ is a trivial principal $\bG$-bundle over $U$.
\end{proof}

\section{Proof of Theorem~\ref{MainThm1}}
\label{Reducing_MainThm1_to_two_others}
\begin{proof}
We will derive Theorem~\ref{MainThm1}
from Theorems~\ref{MainHomotopy}, \ref{th:psv}, \ref{MainThmGeometric}
which are proved already.

If the regular semi-local domain contains an infinite field, then this derivation is done
in
\cite[Section 3]{FP}.
So, it remains to do such a reduction in the case when the regular semi-local domain contains a finite field.
Since the arguments given in \cite[Section 3]{FP} are detailed,
our arguments will be sketchy.

Theorems~\ref{th:psv} and ~\ref{MainHomotopy} yield Theorem~\ref{MainThm1} in the simple simply-connected case,
when the ring $R$ is the semi-local ring of finitely many {\it closed} points on a $k$-smooth variety (the field $k$ is finite).
Indeed, let $k$ be a field and let $\mathcal O$, $\bG$ and $\mathcal G$
be as in Theorem~\ref{MainHomotopy}.
Assume additionally that the group scheme $\bG$ is simple and simply-connected.
Take $\mathcal G_t$ from the conclusion of Theorem~\ref{MainHomotopy}  for the bundle $\mathcal E_t$ from Theorem~\ref{th:psv}.
Then take $h(t)$ from the conclusion of Theorem~\ref{MainHomotopy} for the bundle $\mathcal E_t$ from Theorem\ref{th:psv}.
Finally take the inclusion $i_0: U \hookrightarrow U\times \mathbb A^1$ for the section $s$
from the Theorem~\ref{th:psv}.
Theorem~\ref{th:psv}  with this choice of $\mathcal E_t$, $h(t)$ and $s$
yields triviality of the $\bG$ bundle $\mathcal G$ from Theorem~\ref{MainHomotopy}.

{\it The nearest aim is}
to show that
Theorem~\ref{MainThm1}
holds for the ring $R$ as above in this section and
{\it for arbitrary
semi-simple simply-connected group scheme $\bG$ over $R$
}.
By~\cite[Exp. XXIV 5.3, Prop. 5.10]{SGA3} the category of semi-simple simply connected
group schemes over a Noetherian domain $R$ is semi-simple. In
other words, each object has a unique decomposition into a product
of indecomposable objects. Indecomposable objects can be described
as follows. Take a domain $R^{\prime}$ such that $R\subseteq
R^{\prime}$ is a finite \'{e}tale extension and a simple
simply connected group scheme $\bG^{\prime}$ over $R^{\prime}$. Now,
applying the Weil restriction functor $\R_{R^{\prime}/R}$ to the
$R$-group scheme $\bG^{\prime}$ we get a simply connected $R$-group
scheme $\R_{R^{\prime}/R}(\bG^{\prime})$, which is
an indecomposable object in the above category. Conversely, each
indecomposable object can be constructed in this way.

Take a decomposition of $\bG$ into indecomposable factors
$\bG=\bG_1\times \bG_2\times\dots\times \bG_r$. Clearly, it suffices to
check that for each index $i$ the kernel of the map
$$ H^1_{\text{\'et}}(R,\bG_i)\to H^1_{\text{\'et}}(K,\bG_i) $$
\noindent
is trivial. We know that there exists a finite \'{e}tale extension
$R^{\prime}_i/R$ such that $R^{\prime}_i$ is a domain and the Weil
restriction $\R_{R^{\prime}_i/R}(\bG^{\prime}_i)$ coincides with
$\bG_i$.

The Faddeev---Shapiro Lemma \cite[Exp. XXIV Prop. 8.4]{SGA3} states
that there is a canonical isomorphism
$$
H^1_{\text{\'et}}\big(R, \R_{R^{\prime}_i/R}(\bG^{\prime}_i)\big)
\cong H^1_{\text{\'et}}\big(R^{\prime}_i,\bG^{\prime}_i\big)
$$
\noindent
that preserves the distinguished point.
Applying Theorem
\ref{MainThm1} to the semi-local regular domains $R^{\prime}_i$, its
fraction fields $K_i$, and the $R^{\prime}_i$-group schemes
$\bG^{\prime}_i$ we see that
Theorem~\ref{MainThm1}
holds for the ring $R$ as above in this section and
{\it for arbitrary
semi-simple simply-connected
}
group scheme $\bG$ over $R$.

Now Theorem \ref{MainThmGeometric}
yields Theorem~\ref{MainThm1}
for the ring $R$ as above in this section and for arbitrary
reductive group scheme $\bG$ over $R$.

This latter case
implies easily Theorem~\ref{MainThm1} for arbitrary reductive group scheme over a ring $R$,
where $R$ is the semi-local ring of finitely many {\it arbitrary} points on a $k$-smooth irreducible affine $k$-variety
(see \cite[Lemma 3.3]{FP}). Arguments using Popescu's Theorem complete the proof of Theorem~\ref{MainThm1}
(see \cite[Proof of Theorem 1]{FP}). Those arguments runs now easier since Theorem~\ref{MainThm1} is established already
for semi-local rings of finitely many {\it arbitrary} points on a $k$-smooth variety (with a finite field $k$).
{\it The proof of the theorem~\ref{MainThm1}
is completed.
}

\end{proof}

\section{Two exact sequences}
\label{Section_Aus_Buksbaum_3}
Let $k$ be a field.
\begin{thm}
\label{Aus_Buksbaum_3}
Let $\mathcal O$ be the semi-local ring of finitely many closed points
on a $k$-smooth irreducible affine $k$-variety $X$.
Let $K=k(X)$.
Let
$$\mu: \bG \to \mathbf C$$
be a smooth $\mathcal O$-morphism of reductive
$\mathcal O$-group schemes, with a torus $\mathbf C$.
Suppose additionally that
the kernel of $\mu$ is a reductive $\mathcal O$-group scheme.
Then the following sequence
\begin{equation}
\label{Aus_Buks_sequence_3}
\{0\} \to \mathbf C(\mathcal O)/\mu(\bG(\mathcal O)) \to
\mathbf C(K)/\mu(\bG(K)) \xrightarrow{\sum res_{\mathfrak p}} \bigoplus_{\mathfrak p}
\mathbf C(K)/[\mathbf C(\mathcal O_{\mathfrak p})\cdot \mu(\bG(K))] \to \{0\}
\end{equation}
is exact, where $\mathfrak p$ runs over
the height $1$ primes of $\mathcal O$ and $res_{\mathfrak p}$ is the natural map (the projection to the factor group).
\end{thm}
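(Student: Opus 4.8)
The plan is to build the exact sequence \eqref{Aus_Buks_sequence_3} out of the known Grothendieck--Serre statement (Theorem~\ref{MainThm1}) applied to the reductive kernel of $\mu$, together with standard manipulations of the long exact cohomology sequence. Write $\bH=\Ker(\mu)$, a reductive $\mathcal O$-group scheme by hypothesis, so that $\{0\}\to\bH\to\bG\xrightarrow{\mu}\mathbf C\to\{0\}$ is a short exact sequence of $\mathcal O$-group schemes, exact on the right because $\mu$ is smooth and surjective onto the torus $\mathbf C$ (surjectivity holds since $\mathbf C$ is a torus and $\mu$ is a smooth morphism whose image is an open and closed subgroup scheme; one checks this fibrewise). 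First I would write down, for any regular ring $A$ among $\mathcal O$, $K$ and the localizations $\mathcal O_{\mathfrak p}$, the exact sequence of pointed sets
\[
\mathbf C(A)\xleftarrow{\mu}\bG(A)\ \longrightarrow\ \mathbf C(A)/\mu(\bG(A))\ \hookrightarrow\ H^1_{\et}(A,\bH)\ \to\ H^1_{\et}(A,\bG),
\]
so that $\mathbf C(A)/\mu(\bG(A))$ is identified with the kernel of $H^1_{\et}(A,\bH)\to H^1_{\et}(A,\bG)$.

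The heart of the argument is then a diagram chase. For the left exactness and injectivity of the first map: an element of $\mathbf C(\mathcal O)/\mu(\bG(\mathcal O))$ mapping to $0$ in $\mathbf C(K)/\mu(\bG(K))$ gives an $\bH$-torsor over $\mathcal O$ that becomes trivial over $K$; by Theorem~\ref{MainThm1} applied to $\bH$ it is already trivial over $\mathcal O$, hence the class was $0$. For exactness in the middle term $\mathbf C(K)/\mu(\bG(K))$: an element whose residues $res_{\mathfrak p}$ all vanish lifts, at each height-one prime, into $\mathbf C(\mathcal O_{\mathfrak p})\cdot\mu(\bG(K))$; since $\mathcal O$ is the intersection of its localizations at height-one primes (normality of the regular semi-local ring) one patches these to conclude the class comes from $\mathbf C(\mathcal O)/\mu(\bG(\mathcal O))$ — this is where one must be careful that the ambiguity $\mu(\bG(K))$ is handled uniformly, i.e.\ one works with a fixed representative $c\in\mathbf C(K)$ and shows $c\in\mathbf C(\mathcal O_{\mathfrak p})\cdot\mu(\bG(K))$ for all $\mathfrak p$ forces $c\in\mathbf C(\mathcal O)\cdot\mu(\bG(K))$. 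Surjectivity of $\sum res_{\mathfrak p}$ onto the direct sum: only finitely many primes are involved, and for each $\mathfrak p$ one uses weak approximation / the fact that $\mathbf C(K)\to\mathbf C(K)/[\mathbf C(\mathcal O_{\mathfrak p})\cdot\mu(\bG(K))]$ is surjective and that one can prescribe the image at one $\mathfrak p$ while keeping it integral at the others, by a standard approximation argument for the torus $\mathbf C$ (which is an open subscheme of affine space after an \'etale base change).

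The main obstacle is the patching step that establishes exactness in the middle. The naive statement ``$\mathbf C(\mathcal O)=\bigcap_{\mathfrak p}\mathbf C(\mathcal O_{\mathfrak p})$'' holds by normality, but here one is quotienting by the non-constant subgroups $\mu(\bG(\mathcal O_{\mathfrak p}))$ versus $\mu(\bG(K))$, so the quotient complex is not obviously exact just from $\mathcal O=\bigcap\mathcal O_{\mathfrak p}$. The correct route is the one already used for the infinite-field case: lift the whole situation to cohomology, so that the class of $c$ in $\mathbf C(K)/\mu(\bG(K))\hookrightarrow H^1_{\et}(K,\bH)$ maps into $H^1_{\et}(K,\bG)$, use that its residues vanish to produce an $\bH$-torsor over $\mathcal O$ restricting to the given one over $K$ (this is a purity statement for $H^1$ of the reductive group $\bH$ over the regular semi-local ring — precisely \cite[Thm.~1.1, Thm.~1.2]{Pan2} cited in the introduction), and then apply Theorem~\ref{MainThm1} to kill it. Once this is in place the rest of the sequence is formal. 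I expect the proof to follow \cite[Section?]{FP} essentially verbatim, the only new input being that Theorem~\ref{MainThm1} is now available over rings containing a finite field; so I would phrase the argument so as to isolate exactly where Theorem~\ref{MainThm1} and the purity theorems enter, and remark that with those two ingredients the derivation is the same as in the infinite-field case.
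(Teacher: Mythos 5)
Your proposal matches the paper's own (very terse) proof: exactness on the left is deduced from Theorem~\ref{MainThm1} applied to the reductive kernel $\Ker(\mu)$ via the boundary map identifying $\mathbf C(A)/\mu(\bG(A))$ with $\ker\bigl(H^1_{\et}(A,\Ker\mu)\to H^1_{\et}(A,\bG)\bigr)$, exactness in the middle is exactly the purity theorem \cite[Thm.~1.1]{Pan2}, and surjectivity on the right is the torus statement from \cite{C-T-S} (which your approximation sketch reconstructs). You correctly flag that naive patching over $\bigcap_{\mathfrak p}\mathcal O_{\mathfrak p}$ does not suffice for the middle term and that the purity input is the essential point, so no further comment is needed.
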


\begin{proof}
In fact, the exactness at the left hand side  term is a direct consequence of
Theorem \ref{MainThm1}. The exactness at the right hand side term is
proved in
\cite{C-T-S}.
Theorem \cite[Thm.1.1]{Pan2} yields exactness of the sequence
(\ref{Aus_Buks_sequence_3})
at the middle term.
\end{proof}

The last aim is to prove theorem \ref{PurityForSubgroup}.
Let $k$, $\mathcal O$ and $K$ be as in Theorem
\ref{Aus_Buksbaum_3}.
Let $G$ be a semi-simple $\mathcal O$-group scheme.
Let
$i: Z \hra G$ be a closed subgroup scheme of the center $Cent(G)$.
{\bf It is known that $Z$ is of multiplicative type}.
Let $G'=G/Z$ be the factor group,
$\pi: G \to G'$ be the projection.
It is known that $\pi$ is finite surjective and faithfully flat. Thus
the sequence of $\mathcal O$-group schemes
\begin{equation}
\label{ZandGndGprime}
\{1\} \to Z \xra{i} G \xra{\pi} G^{\prime} \to \{1\}
\end{equation}
induces an exact sequence of group sheaves in $\text{fppt}$-topology.
Thus for every $\mathcal O$-algebra $R$ the sequence
(\ref{ZandGndGprime})
gives rise to a boundary operator
\begin{equation}
\label{boundary}
\delta_{\pi,R}: G'(R) \to \textrm{H}^1_{\text{fppf}}(R,Z)
\end{equation}
One can check that it is a group homomorphism
(compare \cite[Ch.II, \S 5.6, Cor.2]{Se}).
Set
\begin{equation}
\label{AnotherFunctor}
{\cal F}(R)= \textrm{H}^1_{\text{fttf}}(R,Z)/ Im(\delta_{\pi,R}).
\end{equation}
Clearly we get a functor on the category of $\mathcal O$-algebras.
\begin{thm}
\label{PurityForSubgroup}
Let $\mathcal O$ be the semi-local ring of finitely many {\bf closed} points
on a $k$-smooth irreducible {\bf affine} $k$-variety $X$.
Let $G$ be a semi-simple $\mathcal O$-group scheme.
Let
$i: Z \hra G$ be a closed subgroup scheme of the center $Cent(G)$.
Let
${\cal F}$
be the functor
on the category
$\mathcal O$-algebras
given by
(\ref{AnotherFunctor}).
Then the sequence
\begin{equation}
\label{Aus_Buks_sequence_2}
0\to {\cal F}(\mathcal O) \to
{\cal F}(K) \xrightarrow{\sum can_{\mathfrak p}} \bigoplus_{\mathfrak p}
{\cal F}(K)/Im[{\cal F}(\mathcal O_{\mathfrak p}) \to {\cal F}(K)] \to 0
\end{equation}
is exact, where $\mathfrak p$ runs over
the height $1$ primes of $\mathcal O$ and $can_{\mathfrak p}$ is the natural map (the projection to the factor group).
\end{thm}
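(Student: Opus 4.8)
The plan is to deduce Theorem~\ref{PurityForSubgroup} from Theorem~\ref{Aus_Buksbaum_3} by producing a presentation of the functor $\cal F$ in terms of a pair $(\bG,\mathbf C)$ of reductive group schemes with $\mathbf C$ a torus, so that the sequence \eqref{Aus_Buks_sequence_2} becomes an instance of \eqref{Aus_Buks_sequence_3}. Since $Z\subseteq \Cent(G)$ is of multiplicative type, one can embed $Z$ into a quasi-trivial torus; concretely, choose a closed embedding $Z\hookrightarrow \mathbf T$ with $\mathbf T$ a torus over $\mathcal O$ (for instance an induced torus $\mathbf T=\R_{\mathcal O'/\mathcal O}(\Gm{})$ built from a Galois splitting of $Z$), such that $\mathrm H^1_{\text{fppf}}(R,\mathbf T)=0$ and $\mathrm H^1_{\text{fppf}}(R,\mathbf T/Z)=0$ for all the rings $R=\mathcal O,K,\mathcal O_{\mathfrak p}$ in sight (Hilbert~90 / Shapiro). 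First I would form the pushout
\[
 \bG:=(G\times \mathbf T)/Z,\qquad \mathbf C:=\mathbf T/Z,
\]
where $Z$ is embedded antidiagonally, and let $\mu:\bG\to\mathbf C$ be induced by the projection $G\times\mathbf T\to\mathbf T$. Then $\bG$ is a reductive $\mathcal O$-group scheme, $\mathbf C$ is a torus, $\mu$ is smooth with kernel $G$ (reductive), so the hypotheses of Theorem~\ref{Aus_Buksbaum_3} hold.

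The second step is to identify $\mathbf C(R)/\mu(\bG(R))$ with $\cal F(R)$ naturally in $R$. From the exact sequence $1\to Z\to G\times\mathbf T\to\bG\to1$ and its pushforward, together with the vanishing $\mathrm H^1(R,\mathbf T)=0$, one gets $\bG(R)\cong \bigl(G(R)\times\mathbf T(R)\bigr)/Z(R)$ and a commuting ladder of boundary maps; chasing it shows
\[
 \mathbf C(R)/\mu(\bG(R))\;\cong\;\operatorname{coker}\bigl(G(R)\xrightarrow{\ \delta'\ }\mathrm H^1_{\text{fppf}}(R,Z)\bigr),
\]
where $\delta'$ is the boundary map for $1\to Z\to G\to G/Z\to1$ precomposed appropriately — and, using that $\mathbf T(R)\to\mathbf C(R)$ is surjective (as $\mathrm H^1(R,Z)\to\mathrm H^1(R,\mathbf T)=0$ need not hold, so here one instead argues via $\mathrm H^1(R,\mathbf T/Z)=0$ giving $\mathbf T(R)\to\mathbf C(R)$ surjective), that this cokernel is exactly $\mathrm H^1_{\text{fppf}}(R,Z)/\Im(\delta_{\pi,R})=\cal F(R)$. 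The key point to check carefully is that the image of $\mu(\bG(R))$ in $\mathbf C(R)$, transported to $\mathrm H^1(R,Z)$, is precisely $\Im(\delta_{\pi,R})$ together with the part coming from $\mathbf T(R)$, which vanishes in $\cal F$.

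Once the identification $\mathbf C(\,\cdot\,)/\mu(\bG(\,\cdot\,))\cong\cal F(\,\cdot\,)$ is established and seen to be compatible with restriction maps $\operatorname{res}_{\mathfrak p}$ and $\can_{\mathfrak p}$, the exactness of \eqref{Aus_Buks_sequence_2} at all three spots is literally the exactness of \eqref{Aus_Buks_sequence_3}, so Theorem~\ref{Aus_Buksbaum_3} finishes the proof. The main obstacle I anticipate is the second step — arranging an auxiliary torus $\mathbf T$ for which all the relevant $\mathrm H^1$-groups of $\mathbf T$ and $\mathbf T/Z$ vanish simultaneously over $\mathcal O$, $K$, and every $\mathcal O_{\mathfrak p}$, and then verifying that the resulting isomorphism of functors is genuinely natural (so that it intertwines the residue maps). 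If a single $\mathbf T$ does not suffice one can instead work with $\mathbf T$ induced and invoke Shapiro plus Hilbert~90, which does give vanishing of $\mathrm H^1$ of induced tori over all local and semilocal rings here; the quotient $\mathbf T/Z$ is again a torus but perhaps not induced, so for it one uses that $\Pic$ of the regular semilocal rings in question is trivial and the known description of $\mathrm H^1$ of tori over such rings. Modulo this bookkeeping the argument is a formal consequence of Theorem~\ref{Aus_Buksbaum_3}.
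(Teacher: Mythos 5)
Your proposal follows essentially the same route as the paper: embed the multiplicative-type group $Z$ into a permutation torus $T^{+}=\R_{A/\mathcal O}(\Gm{A})$, form $G^{+}=(G\times T^{+})/Z$ and $T=T^{+}/Z$ with the induced smooth map $\mu^{+}:G^{+}\to T$ whose kernel is $G$, identify ${\cal F}(R)\cong T(R)/\mu^{+}(G^{+}(R))$ functorially, and then quote Theorem~\ref{Aus_Buksbaum_3}. One small correction to your ``bookkeeping'' worry: no vanishing of $\textrm{H}^1$ of $T^{+}/Z$ is needed and surjectivity of $T^{+}(R)\to T(R)$ is neither needed nor generally true; the identification comes from the diagram chase using only $\textrm{H}^1_{\text{fppf}}(R,T^{+})=\{*\}$ (Shapiro plus Hilbert~90 for the semi-local algebra $A\otimes_{\mathcal O}R$), which forces the boundary $T(R)\to \textrm{H}^1_{\text{fppf}}(R,Z)$ to be surjective and yields $\textrm{H}^1_{\text{fppf}}(R,Z)/\Im(\delta_{\pi,R})=\ker(\pi_*)=\ker(j^{+}_*)=T(R)/\mu^{+}(G^{+}(R))$.
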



\begin{proof}[Proof of Theorem \ref{PurityForSubgroup}]
The group $Z$ is of  multiplicative type.
So we can find a finite \'{e}tale $\mathcal O$-algebra $A$ and
a closed embedding
$Z \hra R_{A/{\mathcal O}}(\mathbb G_{m,\ A})$
into the permutation torus
$T^{+}=R_{A/\mathcal O}(\mathbb G_{m,\ A})$.
Let
$G^{+}=(G \times T^{+})/Z$
and
$T=T^{+}/Z$,
where
$Z$ is embedded in
$G \times T^{+}$
diagonally.
Clearly
$G^{+}/G=T$.
Consider a commutative diagram
$$
\xymatrix {
{}           & \{1\}                       & \{1\} \\
{}           & {G'} \ar[r]^{id}  \ar[u]          & {G'}   \ar[u]          \\
\{1\} \ar[r] & {G} \ar[r]^{j^+} \ar[u]^{\pi} & {G^+} \ar[r]^{\mu^+} \ar[u]^{\pi^+} & {T} \ar[r] & \{1\} \\
\{1\} \ar[r] & {Z} \ar[r]^{j} \ar[u]^{i} & {T^+} \ar[r]^{\mu} \ar[u]^{i^+} & {T} \ar[u]_{id} \ar[r] &  \{1\} \\
{}           & \{1\} \ar[u]                      & \{1\} \ar[u]\\
}
$$
with exact rows and columns.
By the known fact (see Lemma
\ref{FlatAndEtale})
and Hilbert 90
for the semi-local $\mathcal O$-algebra $A$ one has
$\textrm{H}^1_{\text{fppf}}(\mathcal O,T^+)= \textrm{H}^1_{\text{\'et}}(\mathcal O,T^+)= \textrm{H}^1_{\text{\'et}}(A,\Bbb G_{m,A})=\{* \}$.
So, the latter diagram gives rise to
a commutative diagram of pointed sets
$$
\xymatrix {
{}           & {}                            & {\textrm{H}^1_{\text{fppt}}(\mathcal O,G')} \ar[r]^{id}            & {\textrm{H}^1_{\text{fppt}}(\mathcal O,G')}            \\
{G^+(\mathcal O)} \ar[r]^{\mu^+_\mathcal O} & {T(\mathcal O)} \ar[r]^{\delta^+_\mathcal O}  & {\textrm{H}^1_{\text{fppt}}(\mathcal O,G)} \ar[r]^{j^+_*} \ar[u]^{\pi_*} & {\textrm{H}^1_{\text{fppt}}(\mathcal O,G^+)} \ar[u]^{\pi^+_*}  \\
{T^+(\mathcal O)} \ar[r]^{\mu_\mathcal O} \ar[u]^{i^+_*} & {T(\mathcal O)} \ar[r]^{\delta_\mathcal O} \ar[u]^{id} & {\textrm{H}^1_{\text{fppt}}(\mathcal O,Z)} \ar[r]^{\mu} \ar[u]^{i_*} & {\{*\}} \ar[u]^{i^+_*}  \\
{}           & {} & {G'(\mathcal O)} \ar[u]^{\delta_{\pi}}                      & {} \\
}
$$
with exact rows and columns. It follows that
$\pi^+_*$ has  trivial kernel and one has a chain of group isomorphisms
\begin{equation}
\label{F=T/mu_+(G_+)}
\textrm{H}^1_{\text{fppf}}(\mathcal O,Z) / Im (\delta_{\pi,\mathcal O})= ker (\pi_*)= ker (j^+_*) = T(\mathcal O)/\mu^+ (G^+(\mathcal O)).
\end{equation}
Clearly these isomorphisms respect $\mathcal O$-homomorphisms of semi-local $\mathcal O$-algebras.

The morphism $\mu^{+}: G^{+} \to T$ is a smooth $\mathcal O$-morphism of reductive
$\mathcal O$-group schemes, with the torus $T$. The kernel
$ker(\mu^{+})$
is equal to $G$ and $G$ is a reductive $\mathcal O$-group scheme.

The isomorphisms (\ref{F=T/mu_+(G_+)}) show that the sequence
(\ref{Aus_Buks_sequence_2})
is isomorphic to the sequence
(\ref{Aus_Buks_sequence_3}).
The latter sequence is exact by Theorem
\ref{Aus_Buksbaum_3}.
Whence the theorem.
\end{proof}

\end{document}